\newcommand{\Aut}{{\rm{Aut}}}
\def\ov{\overline}
\def\Ga{\Gamma}
\def\la{\langle}
\def\ra{\rangle}
\newcommand{\OG}{\mathcal{OG}}
\begin{document}

\begin{frontmatter}   

\titledata{A normal quotient analysis for some families of oriented four-valent graphs}           
{}                 







\begin{center}
\authordatanoaffil{Jehan A. Al-bar}
{jalbar@kau.edu.sa; jaal[underscore]bar@hotmail.com}
{This project was funded by the Deanship of Scientific Research (DSR), King Abdulaziz University, Jeddah, 
under grant no. HiCi/H1433/363-1. The authors, therefore, acknowledge with thanks DSR technical and financial support.},
\authordatanoaffil{Ahmad N. Al-kenani}
{analkenani@kau.edu.sa; aalkenani10@hotmail.com}
{},
\authordatanoaffil{Najat Mohammad Muthana}
{nmuthana@kau.edu.sa; najat[underscore]muthana@hotmail.com}
{}
\affiliation{King Abdulaziz University, Jeddah, Saudi Arabia}
\end{center}

\authordata{Cheryl E. Praeger}            
{The University of Western Australia, Crawley, Australia; also affiliated with King Abdulaziz University, Jeddah, Saudi Arabia}
{cheryl.praeger@uwa.edu.au}
{corresponding author}

\keywords{edge-transitive graph, oriented graph, cyclic quotient graph, transitive group.}               
\msc{05C25, 20B25, 05C20}                       

\begin{abstract}
We analyse the normal quotient structure of several infinite families of 
finite connected edge-transitive, four-valent oriented graphs. These 
families were singled out by Maru\v{s}i\v{c} and others to illustrate 
various different internal structures for these graphs in terms of their 
alternating cycles (cycles in which consecutive edges have opposite orientations). 
Studying the normal quotients gives fresh insights into these oriented graphs: 
in particular we discovered some unexpected `cross-overs' between these graph 
families when we formed normal quotients. We determine which of these 
oriented graphs are `basic', in the sense that their only proper normal 
quotients are degenerate.  Moreover, we show that the three types of edge-orientations 
studied are the only orientations, of the underlying undirected graphs in 
these families, which are invariant under a group action which is both vertex-transitive and edge-transitive.
\end{abstract}

\end{frontmatter}   


\section{Introduction}\label{sec:intro} 

The graphs we study are simple, connected, undirected graphs 
of valency four, admitting an orientation of their edges preserved by a vertex-transitive and 
edge-transitive subgroup of the automorphism group, that is, graphs 
of valency four admitting a \emph{half-arc-transitive} group action. Many of these graphs arise as medial graphs for
regular maps on Riemann surfaces \cite{Mar1, MN}. The work of Maru\v{s}i\v{c}, 
summarised in \cite{Mar1}, demonstrated the importance of a certain family of cyclic subgraphs for 
understanding the internal structure of these graphs, namely their \emph{alternating cycles}. These are cycles in which
each two consecutive edges have opposite orientations. They were introduced in \cite{Mar2} and 
their basic properties were studied in \cite{Mar2, MP}.  

The families of oriented graphs studied in 
this paper were singled out in \cite{MN, MP} because they demonstrate three different extremes 
for the structure of their alternating cycles: namely the alternating cycles are `loosely attached', 
`antipodally attached' or `tightly attached' (see Subsection~\ref{sec:alt}).
These families are based on one of two infinite families of underlying 
unoriented valency four graphs (called $X(r)$ and $Y(r)$ for positive integers $r$), 
and for each family three different edge-orientations are induced by three different 
subgroups of their automorphism groups (Section~\ref{sub:results}); the three different edge-orientations
correspond to the three different attachment properties of their alternating cycles. 
In this paper it is shown in Theorem~\ref{thm:allorients} that these are essentially the only 
edge-orientations of the underlying graphs which are invariant under a half-arc-transitive group action.
 
Our approach is to study the normal quotients of these `$X$-graphs' and `$Y$-graphs' 
(as explained below), and in doing so we discover that graphs from a third family 
arise,  the `$Z$-graphs'. We find (Theorem~\ref{thm:main})  
that normal quotients of the $X$-graphs are sometimes 
$X$-graphs, sometimes $Y$-graphs, and sometimes $Z$-graphs, and the same is true for 
normal quotients of the $Y$-graphs. 
In addition we determine in Theorem~\ref{thm:mainB} those oriented graphs in these families 
which are `basic' in the sense that all their proper normal quotients are degenerate (see Subsection~\ref{sec:nquot}). 

\subsection{Graph-group pairs and their normal quotients}\label{sec:nquot}
The normal quotient approach was introduced in \cite{janc1} to study oriented graphs,
focusing on the structure of certain quotient graphs rather than subgraphs. For a connected oriented four-valent graph
$\Ga$ with corresponding vertex- and edge-transitive group $G$ preserving the edge-orientation, a \emph{normal quotient} of $(\Ga,G)$ is
determined by a normal subgroup $N$ of $G$. It is the graph $\Ga_N$ with vertices the $N$-orbits on 
the vertices of $\Ga$, and with distinct $N$-orbits $B, C$ adjacent provided there is some edge of 
$\Ga$ between a vertex of $B$ and a vertex of $C$. The normal quotient theory in \cite{janc1} asserts
(with specified degenerate exceptions) that the normal quotient $\Ga_N$ has valency four, and inherits an edge-orientation from $\Ga$
which is preserved by the quotient group $G/N$ acting transitively on 
vertices and edges, and, moreover, $\Ga$ is a cover of $\Ga_N$ (that is, for 
adjacent $N$-orbits $B, C$, each vertex of $B$ is adjacent to exactly one vertex of $C$). 

We call a pair $(\Ga,G)$ \emph{basic} if the only proper normal quotients (that is, taking $N\ne 1$) are 
degenerate. Each pair $(\Ga,G)$ has at least one basic normal quotient, (more details are given in Section~\ref{sec:prelim},
and see \cite{janc1, janc2}).
For the pairs $(\Ga,G)$ we study in this paper, there is always a basic normal quotient which lies in one of 
the families (possibly a different family from the family containing $(\Ga,G)$).
Thus, although all the graph-group pairs in a given family share the same properties 
of their alternating cycles (Remark~\ref{rem:xy}), the structure of the family
can be further elucidated by studying the much smaller subfamily of basic pairs.

\subsection{The alternating cycles of {Maru\v{s}i\v{c}}} \label{sec:alt}

Let $\OG(4)$ denote the set of all graph-group pairs $(\Ga,G)$, where $\Ga$ is a connected graph 
of valency four, $G$ is a vertex-transitive and 
edge-transitive subgroup of automorphisms, and $G$ preserves an orientation of the edges.
In particular $G$ is not transitive on the arcs of $\Ga$ (ordered pairs of vertices which form an edge),
and such a group action is called \emph{half-arc-transitive}. 

Alternatively we could interpret each pair $(\Ga,G)\in\OG(4)$ as consisting of a connected, 
undirected graph $\Ga$ of valency four, and a group $G$ of automorphisms acting half-arc-transitively. 
Such a group action determines an edge-orientation of $\Ga$ (up to reversing the orientation on every edge).
It is possible for a single graph to have different edge-orientations determined by different subgroups of 
automorphisms. This is the case with the examples in Subsection~\ref{sub:results}~(a) and (b). 
By viewing the fundamental objects of study as undirected graphs endowed with (perhaps several) edge-orientations, 
we are able to give a unified discussion of all possible edge-orientations preserved 
by half-arc-transitive group actions.  
Our notation is therefore slightly different from the papers \cite{MN, MP} where different names 
for the same graph are used for different edge-orientations. 

The alternating cycles of $(\Ga,G)$ (defined above), are determined by the edge-orientation. 
They partition the edge set, they 
all have the same even length, denoted by $2\cdot r(\Ga,G)$, where $r(\Ga,G)$ is called the \emph{radius} of $(\Ga,G)$, 
and if two alternating cycles share at least one vertex, then they 
intersect in a constant number $a(\Ga,G)$ of vertices, called the \emph{attachment number}, such that $a(\Ga,G)$ 
divides $2\cdot r(\Ga,G)$ \ ~\cite[Proposition 2.4]{Mar2}. 
The radius can be any integer greater than 1 (\cite[Section 3]{Mar2} and \cite[Section 4]{MP}),
and the attachment number can be any positive integer \cite[Theorem 1.5]{MW}. It is possible that $a(\Ga,G)=2\cdot r(\Ga,G)$
and all such pairs $(\Ga,G)$ were characterised by Maru\v{s}i\v{c} in \cite[Proposition 2.4(ii)]{Mar2}. 

Otherwise, if $a(\Ga,G)<2\cdot r(\Ga,G)$,
then $\Ga$ is a cover of a (possibly smaller) quotient graph $\Ga'$ admitting a (possibly unfaithful) action $G'$ of $G$
such that $(\Ga',G')\in\OG(4)$ and the attachment number $a(\Ga',G')$ is either $1, 2$ 
or $r(\Ga',G')$, \cite[Theorem 1.1  and Theorem 3.6]{MP}. 
For this reason attention has focused on families of examples $(\Ga,G)\in\OG(4)$ 
for which $a(\Ga,G)$ is $1, 2$ or $r(\Ga,G)$, and such pairs are said to be
\emph{loosely attached, antipodally attached} or \emph{tightly attached}, respectively.
As we already mentioned, graphs in the families we examine have one of these properties, 
and the structure of their alternating cycles was studied by Maru\v{s}i\v{c} 
and others \cite{CPS, HKM, Mar2, MN, MP, MS, MW, PSV}.
  
%


\subsection{The families of oriented graphs and our results}\label{sub:results} 
We describe the underlying graphs and their edge-orientations.

\smallskip\noindent
(a)  The first three families of oriented graphs are all based on the 
same family of graphs, namely the Cartesian product $X(r)= C_{2r}\,\square\, C_{2r}$  
of two cycles of length $2r$, for a positive integer $r$.
The graph $X(r)$ has vertex set $\mathbb{Z}_{2r}\times \mathbb{Z}_{2r}$, 
such that $(i,j)$ is adjacent to $(i\pm 1, j)$ and $(i, j\pm 1)$ for
all $i,j \in \mathbb{Z}_{2r}$. If $r=1$ then $X(1)=C_4$, a $4$-cycle. 
If $r\geq2$, then $X(r)$ has valency $4$ and its automorphism group is 
$G(r) = \Aut\,X(r) = D_{4r}\wr Z_2 = D_{4r}^2\rtimes Z_2$, with generators
\begin{align*}
\mu_1    &: (i,j)\longrightarrow (i+1,j), &\hspace{1cm} &\mu_2   : (i,j)\longrightarrow (i,j+1),\\
\sigma _1&: (i,j)\longrightarrow (-i,j),  &\hspace{1cm} &\sigma _2: (i,j)\longrightarrow (i,-j), \\
\tau     &: (i,j)\longrightarrow (j,i).
\end{align*}
The group $G(r)$ is arc-transitive of order $32r^2$, and the stabiliser of the vertex $x=(0,0)$ is 
$G(r)_x=\la \sigma_1,\sigma_2,\tau\ra$, a dihedral group of order 8. In 1999, Maru\v{s}i\v{c} and 
the fourth author \cite{MP} defined three edge-orientations on the graphs $X(r)$, each of which 
corresponds to a half-arc-transitive action of a certain subgroup of $G(r)$, as follows. 
The second edge-orientation was also studied by Maru\v{s}i\v{c} and Nedela \cite{MN}. 
As mentioned above, by Theorem~\ref{thm:allorients}, these are the only edge-orientations
preserved by a half-arc-transitive subgroup of $\Aut\,X(r)$,
up to conjugation in the automorphism group, and reversing the orientations on all edges. We note 
that elements of $\mathbb{Z}_{2r}$ have a well-defined parity (even or odd).

\begin{itemize}
\item[(a.1)] Define the first edge-orientation by
$$(i,j)\rightarrow (i,j+1) \hspace{.2cm} \mbox{if $i$ is even,}  \hspace{.2cm} (i,j)\leftarrow (i,j+1) \hspace{.2cm} \mbox{if $i$ is odd}$$ 
$$(i,j)\rightarrow (i+1,j) \hspace{.2cm} \mbox{if $j$ is even,} \hspace{.2cm} (i,j)\leftarrow (i+1,j)\hspace{.2cm} \mbox{if $j$ is odd}$$
and the corresponding group 
$$G_1(r):=\la \mu_1\sigma_2, \mu_2\sigma_1, \tau\ra.$$ 
Note that $G_1(r)_x=\la\tau\ra\cong Z_2$, and $|G_1(r)|=8r^2$.
\item[(a.2)] Define the second edge-orientation by
$$(i,j)\rightarrow (i, j+1), \hspace{.3cm} (i,j)\leftarrow (i+1,j) \hspace{.3cm}\mbox{if}\hspace{.2cm} i+j\hspace{.2cm}\mbox{is even}$$
$$(i,j)\leftarrow (i,j+1), \hspace{.3cm} (i,j)\rightarrow (i+1,j)\hspace{.3cm}\mbox{if}\hspace{.2cm} i+j\hspace{.2cm}\mbox{is odd}$$
and the corresponding group 
$$G_2(r):=\la \mu_1\mu_2, {\mu_1}^2,  \sigma _1, \sigma _2, \tau\mu_1\ra.$$ 
Note that $G_2(r)_x=\la\sigma_1, \sigma_2\ra\cong Z_2^2$, and $|G_2(r)|=16r^2$.
\item[(a.3)] Define the third edge-orientation by
$$(i,j)\rightarrow (i,j+1), \hspace{.3cm} (i,j)\rightarrow (i+1,j)\hspace{.3cm}\mbox{for all} \hspace{.2cm}i\hspace{.2cm} \mbox{and}\hspace{.2cm} j$$
and the corresponding group 
$$G_3(r):=\la \mu_1, \mu_2, \tau\ra.$$ 
Note that $G_3(r)_x=\la\tau\ra\cong Z_2$, and $|G_3(r)|=8r^2$.
\end{itemize}
In Remark~\ref{rem:xir} we discuss briefly how these three edge-orientations may be visualised. In neither of the papers \cite{MN, MP} where these graphs were previously studied are the generators of the groups $G_k(r)$ defined explicitly, and we need this information for our analysis. Indeed in order to analyse these families of oriented graphs we need an additional graph family related to the third edge orientation.
\begin{itemize}
\item[(a.4)] Let $s$ be an \emph{odd integer}, $s\geq3$, and let $Z(s)= C_{s}\,\square\, C_{s}$ (that is, the graph $X(r)$ with $2r$ replaced by  
$s$); let $\mu_1,\mu_2,\tau$ have the same meanings as above (as permutations of the set $\mathbb{Z}_s\times\mathbb{Z}_s$); 
define the edge-orientation as in (a.3); and call the corresponding group $G_{3Z}(s)=\la\mu_1,\mu_2,\tau\ra$.
\end{itemize}

\smallskip\noindent
(b)  The last three families of oriented graphs are all based on certain quotients of the graphs 
in part (a). Define the (undirected) graph $Y(r)$ as the quotient of $X(r)$ modulo the orbits of 
the normal subgroup $M(r):=\la(\mu_1\mu_2)^r\ra$ of $G(r)$. Thus the vertices of $Y(r)$ are the 
$2$-element subsets $\{(i,j),(i+r,j+r)\}$, for $i, j\in\mathbb{Z}_{2r}$, and the vertex 
$\{(i,j),(i+r,j+r)\}$ of $Y(r)$ is adjacent to $\{(i\pm1,j),(i+r\pm1,j+r)\}$ and 
$\{(i,j\pm1),(i+r,j+r\pm1)\}$. The graph $Y(1)=K_2$ and, for $r\geq2$, $Y(r)$ has valency 
$4$, and admits an arc-transitive action of the quotient $H(r):=G(r)/M(r)$. Moreover, $X(r)$ 
is a cover of $Y(r)$. In particular, $Y(2)=K_{4,4}$, a complete bipartite graph.

For $k=2, 3$ with $r\geq2$, and also for $k=1$ with $r$ even, we have $M(r)\leq G_k(r)$ and we 
define $H_k(r):=G_k(r)/M(r)$. Then the graph-group pair 
$(Y(r),H_k(r))\in\OG(4)$ (Lemma~\ref{lem:x2}). It is the  normal quotient of the pair 
$(X(r), G_k(r))$ relative to $M(r)$, and by \cite[Theorem 1.1]{janc1}, $Y(r)$ inherits 
the $k^{th}$-edge-orientation from $X(r)$.  
 
\smallskip\noindent
For any of the graphs $X(r), Y(r), Z(s)$, each of the edge-orientations defined above
is invariant under some edge-transitive subgroup of automorphisms (by Theorem~\ref{tbl:mainB}),
and these are essentially the only such edge-orientations for these graphs.

\begin{theorem}\label{thm:allorients}
Let $\Gamma = X(r)$ with $r\geq2$, or $\Gamma=Y(r)$ with $r\not\in\{1,2,4\}$, 
or $\Gamma=Z(s)$ with $s$ odd, $s > 1$. 
Then, up to conjugation in $\Aut\,\Ga$, and up to reversing  the orientation on each edge, the only edge-orientations of $\Gamma$ 
invariant under a half-arc-transitive subgroup of $\Aut\,\Gamma$ are those defined in Subsection~$\ref{sub:results}$.     
\end{theorem}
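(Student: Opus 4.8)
The plan is to classify all half-arc-transitive (HAT) group actions on each of the graphs $X(r)$, $Y(r)$, $Z(s)$ by working inside the full automorphism group $\Aut\,\Ga$, and then read off the induced edge-orientations. The key structural fact is that a vertex- and edge-transitive group $G \le \Aut\,\Ga$ acts half-arc-transitively precisely when it is \emph{not} transitive on arcs; since $\Aut\,\Ga$ itself is arc-transitive (for $X(r)$ with $r\ge 2$ the stabiliser $G(r)_x$ is dihedral of order $8$, acting transitively on the four arcs at $x$), a HAT subgroup $G$ must have vertex-stabiliser $G_x$ that is a proper subgroup of $\Aut(\Ga)_x$ fixing some orientation of the four edges at $x$ but interchanging no edge with its reverse. **First I would** enumerate, up to conjugacy in $\Aut(\Ga)_x$, the subgroups $S \le \Aut(\Ga)_x$ that preserve an edge-orientation at $x$, meaning $S$ has two orbits of size $2$ (or one orbit structure compatible with orientation) on the four arcs and no orbit containing both an arc and its reverse; by the orbit--stabiliser correspondence these $S$ are exactly the possible vertex-stabilisers $G_x$.

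Concretely, for $\Ga = X(r)$ the point stabiliser is $\la\sigma_1,\sigma_2,\tau\ra \cong D_8$, and **the second step** is to list its subgroups $S$ that fix no arc-reversal: this is a finite, purely group-theoretic check producing a short list of candidates, which should match the stabilisers $\la\tau\ra$, $\la\sigma_1,\sigma_2\ra$, and $\la\tau\ra$ appearing in the definitions of $G_1(r)$, $G_2(r)$, $G_3(r)$ in Subsection~\ref{sub:results}(a). **Then I would** show that each admissible $S$ determines the HAT group $G$ essentially uniquely: since $\Ga$ is connected and $G$ is vertex-transitive, $G = \la G_x, t\ra$ where $t$ is any element carrying $x$ to a neighbour, so $G$ is generated by $G_x$ together with a translation; matching $|G| = |G_x|\cdot|V\Ga|$ pins down $G$ up to conjugacy, and one verifies the edge-orientation it preserves is (up to reversal) one of those in (a.1)--(a.3). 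The arguments for $Y(r)$ and $Z(s)$ proceed the same way, using $H(r)=G(r)/M(r)$ and $G_{3Z}(s)$ respectively; the excluded values $r\in\{1,2,4\}$ for $Y(r)$ are exactly the degenerate cases ($Y(1)=K_2$, $Y(2)=K_{4,4}$, and $Y(4)$) where $\Aut\,Y(r)$ is larger or the point stabiliser admits extra exceptional orientation-preserving subgroups.

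**The main obstacle** will be controlling the possibility that $\Aut\,\Ga$ is strictly larger than the group $G(r)$ (respectively $H(r)$) recorded in the excerpt, so that unexpected HAT subgroups could arise from extra automorphisms; this is precisely why the special small values of $r$ must be excluded for $Y(r)$, where the complete bipartite graph $K_{4,4}$ and the sporadic case $Y(4)$ have much larger automorphism groups. I would handle this by invoking or proving that for the generic range of parameters the stated group really is the full automorphism group — for $X(r)$ with $r\ge 2$ this is asserted in the excerpt, and for $Y(r)$ one argues that $\Aut\,Y(r)$ is the stated quotient $H(r)$ outside the listed exceptions. Once $\Aut\,\Ga$ is known exactly, the remainder reduces to the finite stabiliser computation described above, and the classification of orientations follows by matching each admissible $G_x$ to its orbit decomposition of the four arcs at $x$.
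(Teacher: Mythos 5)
Your reduction to a stabiliser computation breaks down at the uniqueness step, and that step is exactly where the paper has to do its real work. The claim that an admissible stabiliser $S\le \Aut(\Ga)_x$ together with the order $|G|=|S|\cdot|V\Ga|$ ``pins down $G$ up to conjugacy'' is false: $G_1(r)$ and $G_3(r)$ have the \emph{same} vertex stabiliser $\la\tau\ra$ and the same order $8r^2$, yet they are not conjugate in $\Aut\,X(r)$ and they preserve genuinely different orientations (loosely attached of radius $2$ versus tightly attached of radius $2r$). Non-conjugacy is immediate from the fact that the translation subgroup $M=\la\mu_1,\mu_2\ra$ is normal in $\Aut\,X(r)$, with $M\le G_3(r)$ but $G_1(r)\cap M=\la\mu_1^2,\mu_2^2\ra$. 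Your own candidate list ($\la\tau\ra$, $\la\sigma_1,\sigma_2\ra$, $\la\tau\ra$) repeats a stabiliser and should have flagged this. So after fixing $G_x$ one must still determine \emph{which} subgroups $H$ with that stabiliser act half-arc-transitively and which orientation each preserves; this cannot be read off from $|H|$. The paper does it by analysing $H\cap M$ (its index in $M$, its shape as an invariant sublattice) and which coset representatives of $M$ can lie in $H$ (Lemmas~\ref{lem:z}, \ref{lem:tau} and \ref{lem:sigsig}), using the arc-reversal exclusions of Lemma~\ref{lem:h}(a); nothing in your proposal substitutes for this. A related inaccuracy: the element carrying $x$ to a neighbour need not be a translation (in $G_1(r)$ it is a glide such as $\mu_1\sigma_2$; indeed $G_1(r)$ contains no vertex-transitive subgroup of $M$), so ``$G_x$ together with a translation'' is not an available normal form, even though $G=\la G_x,t\ra$ itself is correct.

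A secondary gap is that your list of admissible stabilisers is incomplete: besides $\la\tau\ra$ (and its conjugate $\la\sigma_1\sigma_2\tau\ra$) and $\la\sigma_1,\sigma_2\ra$, the central involution $\sigma_1\sigma_2$ also has two orbits of length $2$ on the neighbours of $x$ and reverses no arc, and there do exist half-arc-transitive subgroups of order $8r^2$ with stabiliser exactly $\la\sigma_1\sigma_2\ra$ (index-$2$ subgroups of $G_2(r)$) --- a third non-conjugate family with order $8r^2$ and stabiliser $\cong Z_2$. The theorem survives because every such $H$ is contained, up to conjugacy, in $G_2(r)$ and hence preserves the second orientation, but proving that containment is again a computation of the above kind (Lemma~\ref{lem:sigsig}), not a counting argument. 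Your treatment of the other graphs is otherwise in the paper's spirit: for $Y(r)$ with $r\notin\{1,2,4\}$ one shows $\Aut\,Y(r)=G(r)/M(r)$ (though the paper then \emph{lifts} orientations through the double cover $X(r)\to Y(r)$ rather than redoing the analysis in the quotient), and for $Z(s)$ the subgroup $\la\mu_1,\mu_2\ra$ is the unique Hall $2'$-subgroup of $\Aut\,Z(s)$, so it lies in every vertex-transitive subgroup and the case closes quickly; but these reductions do not repair the central uniqueness gap.
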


\smallskip\noindent
We prove Theorem~\ref{thm:allorients} in Section~\ref{sub:allorients}.
By \cite[Lemma 3.3]{janc1}, each graph-group pair $(\Ga,G)\in\OG(4)$ is a normal cover of at least one basic pair in $\OG(4)$. 
It turns out that the graph-group pairs  $(X(r), G_k(r))$, $(Y(r),H_k(r))$, and $(Z(s), G_{3Z}(s))$ all lie in $\OG(4)$, and are all
normal covers of at least one basic pair from one of these families, but not necessarily from the same family (Remark~\ref{rem:xir}~(c)).
Our main results identify which of these pairs is basic (Theorem~\ref{thm:mainB}), and present some of
their interesting normal quotients (Theorem~\ref{thm:main}). The types of basic graph-group pairs are defined
according to the kinds of degenerate normal quotients they have. These are summarised in Table~\ref{tbl:basictypes}, 
taken from \cite[Table 2]{janc1}.

\begin{table}
\begin{center}
\begin{tabular}{lll}
\hline
Basic Type & Possible $\Ga_N$ for $1\ne N\vartriangleleft G$& Conditions on $G$-action \\ &&on vertices \\ \hline
quasiprimitive & $K_1$ only & quasiprimitive \\
biquasiprimitive & $K_1$ and $K_2$ only ($\Ga$ bipartite) & biquasiprimitive \\
cycle & at least one $C_m$ ($m\geq3$) & at least one quotient action\\ && $D_{2m}$ or $Z_m$ \\ \hline
\end{tabular}
\caption[Basic]{Types of basic pairs $(\Ga,G)$ in $\OG(4)$}
\label{tbl:basictypes}
\end{center}
\end{table}

\begin{theorem}\label{thm:mainB}
Let $(\Ga,G)$ be one of the graph-group pairs in Table~$\ref{tbl:mainB}$. 
Then $(\Ga,G)\in\OG(4)$. Moreover $(\Ga,G)$ is basic if and only if
the conditions in the `Conditions to be Basic' column hold, and in this case, the basic type
is given in the `Basic Type' column. 
\end{theorem}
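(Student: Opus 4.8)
The plan is to prove the two assertions of Theorem~\ref{thm:mainB} separately: first that each listed pair lies in $\OG(4)$, and then the ``basic'' dichotomy together with the identification of the basic type. For membership in $\OG(4)$ I would argue family by family. Connectivity and valency four are immediate for $X(r)$ and $Z(s)$ with $r\ge2$, $s\ge3$ (Cartesian products of cycles), and pass to the quotients $Y(r)$ by Lemma~\ref{lem:x2}. Vertex- and edge-transitivity of $G$ is checked on generators: the translations $\mu_1,\mu_2$ together with $\tau$ already act transitively on vertices, and one exhibits a single element carrying a fixed edge to a representative of each edge orbit. The only substantive point is that $G$ \emph{preserves} but does not \emph{reverse} the stated orientation, so that the action is half-arc-transitive and $G$ has exactly two arc orbits; this I would verify by confirming that each generator respects the sign conventions of (a.1)--(a.4) (which is precisely how the generators were chosen). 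For the $Y$-families it then follows from \cite[Theorem 1.1]{janc1}, since $(Y(r),H_k(r))$ is the normal quotient of $(X(r),G_k(r))$ by $M(r)$.

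For the basic analysis I would first record a reduction: a pair $(\Ga,G)$ is basic if and only if \emph{every minimal} normal subgroup of $G$ yields a degenerate quotient. Indeed a subgroup of a semiregular group is semiregular and has at least as many orbits, and if no vertex has two neighbours in one $N$-orbit then the same holds for any $N_0\le N$; hence if some $N$ gives a valency-four quotient then so does any minimal normal subgroup inside $N$, while the converse is trivial. It therefore suffices to enumerate the minimal normal subgroups of each of the seven groups $G_1(r),G_2(r),G_3(r),H_1(r),H_2(r),H_3(r),G_{3Z}(s)$ and to classify their quotients. Each of these groups is solvable with an abelian ``translation'' subgroup inside $\la\mu_1,\mu_2\ra$, so its minimal normal subgroups are elementary abelian and correspond to the primes $p$ dividing $2r$ (respectively $s$); concretely they are the order-$p$ \emph{diagonal} and \emph{anti-diagonal} cyclic subgroups $\la(\mu_1\mu_2)^{t}\ra$ and $\la(\mu_1\mu_2^{-1})^{t}\ra$, which are exactly the lines invariant under $\tau$ (or, for $G_2(r)$, under $\tau\mu_1$).

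The decisive criterion, from \cite[Theorem 1.1]{janc1}, is that for such an $N$ the quotient $\Ga_N$ is non-degenerate of valency four precisely when $N$ is semiregular, has at least three orbits, and identifies no two of the four neighbours of any vertex; otherwise $\Ga_N$ is $K_1$, $K_2$, or a cycle. For a diagonal translation by $(d,d)$ the last condition fails exactly when $d=1$ (equivalently when the surviving cycle length equals $p$), and likewise for the anti-diagonal; this is the mechanism that produces cycle quotients, and it separates the rare basic pairs from the generic non-basic ones. In particular $M(r)=\la(\mu_1\mu_2)^{r}\ra$ is semiregular with $2r^2\ge8$ orbits and, since $r\ge2$, never identifies neighbours, so $(X(r),G_k(r))$ covers the valency-four graph $Y(r)$ and is \emph{not} basic (when $k=1$ with $r$ odd, where $M(r)\not\le G_1(r)$, I would instead use a diagonal $\la(\mu_1\mu_2)^{t}\ra$ with $t\ge2$ to the same effect). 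By contrast $(Z(s),G_{3Z}(s))$ has only the diagonal and anti-diagonal $\mathbb{Z}_p$ as minimal normal subgroups, and both identify neighbours exactly when $s=p$ is prime, yielding a $C_s$ quotient and no valency-four quotient; so it is basic of cycle type precisely when $s$ is prime. The $Y$-families are intermediate: working modulo $M(r)=\la(\mu_1\mu_2)^{r}\ra$, one must decide for each diagonal or anti-diagonal minimal normal subgroup whether it reaches the neighbour-identifying unit sublattice, and this is governed by the prime factorisation and parity of $r$, giving the conditions tabulated. Finally the basic \emph{type} is read off Table~\ref{tbl:basictypes}: a cycle quotient forces cycle type, and in its absence bipartiteness of $\Ga$ with a $K_2$ quotient gives biquasiprimitive, otherwise quasiprimitive.

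The main obstacle is this last computation carried out uniformly: a complete and correct enumeration of the minimal normal subgroups across all seven families, and for each the verdict cover versus cycle/$K_2$/$K_1$. The delicate cases are $G_1(r),G_2(r)$ and their quotients $H_1(r),H_2(r)$, whose generators involve the reflections $\sigma_1,\sigma_2$ and the twisted element $\tau\mu_1$, so that the translation subgroup is a proper characteristic subgroup of $\la\mu_1,\mu_2\ra$ and the relevant invariance is under these twisted elements rather than under $\tau$ alone. Getting the $d=1$ neighbour-identification test right in the presence of the $M(r)$-quotient, and matching the resulting divisibility-and-parity bookkeeping to the exact primality (and prime-power-plus-parity) conditions recorded in Table~\ref{tbl:mainB}, is where essentially all the work lies.
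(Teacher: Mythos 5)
Your first paragraph (membership in $\OG(4)$) and your reduction of basicness to minimal normal subgroups are both sound and essentially parallel the paper: the paper verifies orientation-preservation generator by generator (Lemma~\ref{lem:x1}), exhibits a regular normal subgroup and a stabiliser swapping the two out-neighbours (Lemma~\ref{lem:x2}), and its basicness proof (Lemma~\ref{lem:basic}) likewise passes to a minimal normal subgroup $N$ and asks when $\Ga_N$ is degenerate.

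The genuine gap is your structural claim that the minimal normal subgroups of all seven groups are the diagonal and anti-diagonal cyclic subgroups $\la(\mu_1\mu_2)^t\ra$ and $\la(\mu_1\mu_2^{-1})^t\ra$. That is true for $G_3(r)$, $H_3(r)$ and $G_{3Z}(s)$, where the conjugation action on $\la\mu_1,\mu_2\ra$ is only the coordinate swap $\tau$, but it is false precisely for the groups that decide the theorem, namely $G_1(r)$, $G_2(r)$ and their quotients. Those groups contain reflections: conjugation by $\mu_1\sigma_2\in G_1(r)$ (or by $\sigma_2\in G_2(r)$) sends $\mu_1^2\mu_2^2$ to $\mu_1^2\mu_2^{-2}$, so it \emph{interchanges} the diagonal and anti-diagonal lines, and neither is normal (in $G_1(r)$ the full diagonal $\la\mu_1\mu_2\ra$ is not even a subgroup, since $\mu_1\mu_2 = (\sigma_1\sigma_2)\cdot(\mu_1\sigma_2)(\mu_2\sigma_1)$ and $\sigma_1\sigma_2\notin G_1(r)$). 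The paper's entire basicness argument for these families rests on exactly this phenomenon: by Lemma~\ref{lem:cent}, any minimal normal subgroup giving a nondegenerate quotient would have to be a line of order $r$ inside $N(2)\cong Z_r\times Z_r$ invariant under the whole group, and for $r$ an odd prime the invariance conditions under $\tau$ (resp.\ $\tau\mu_1$) and under the reflections are incompatible, so \emph{no} such line exists and every nontrivial normal subgroup has at most four vertex-orbits, forcing all proper quotients to be degenerate.

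This error propagates into a conclusion that contradicts the very statement you are proving: you assert that for $k=1$ with $r$ odd one can use ``a diagonal $\la(\mu_1\mu_2)^{t}\ra$ with $t\ge2$'' to show $(X(r),G_1(r))$ is not basic, whereas Table~\ref{tbl:mainB} says $(X(r),G_1(r))$ \emph{is} basic exactly when $r$ is an odd prime. The proposed witness fails because it is not normal in $G_1(r)$; and for $r$ odd composite the correct witness of non-basicness is not a diagonal but the sublattice subgroup $N(2a)=\la\mu_1^{2a},\mu_2^{2a}\ra$ with $1<a<r$ (paper's Table~\ref{tbl:main}, line~1). A correct execution of your plan would also need the transfer mechanism between the $X$- and $Y$-families (the paper's Lemma~\ref{lem:quotiso}) and the $O_2$-computations of Lemma~\ref{lem:cent}(b), which you do not supply; but the decisive defect is the misclassification of the minimal normal subgroups of $G_1(r)$ and $G_2(r)$.
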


\begin{table}
\begin{center}
\begin{tabular}{clll}
$(\Ga,G)$         &Conditions          &Conditions to be Basic&Basic Type \\ \hline
$(X(r),G_k(r))$   &all $k$ and $r$                  & $k=1$, $r$ odd prime &cycle  \\ 
$(Y(r),H_k(r))$   &all $k$ and $r$ even&all $k$ and $r=2$     &cycle  \\ 
$(Y(r),H_k(r))$   &$k>1$ and $r$ odd   &$k=2$, $r$ odd prime  &biquasiprimitive            \\ 
$(Z(s),G_{3Z}(s))$&$s\geq3$ odd        &$s$ odd prime         &cycle  \\ 
\hline  
\end{tabular}
\end{center}
\caption{Conditions for $(\Ga,G)$ to be basic in Theorem~\ref{thm:mainB}. (Refer to Lemma~\ref{lem:basic}.)} \label{tbl:mainB}
\end{table}

We verify  membership of $\OG(4)$ in Lemma~\ref{lem:x2}, and prove the assertions in Table~\ref{tbl:mainB} 
in Lemma~\ref{lem:basic}. 
The normal quotients we explore are those modulo the following subgroups of $G(r)$ and/or $G_{3Z}(s)$. We note that $|\mu_i|$ is $2r$ or $s$,
when interpreting $\mu_i$ as an element of $G(r)$ or $G_{3Z}(s)$, respectively.  
For each divisor $a$ of $2r$ or of $s$, as appropriate, define
\begin{align}\label{na} 
N(a)  &= \la \mu_1^{a}, \mu_2^{a}\ra\                            &\mbox{if}\ && a &\mid |\mu_1| \\
M(a)  &= \la (\mu_1\mu_2)^{a}, \mu_1^{2a}\ra                    &\mbox{if}\ && 2a &\mid |\mu_1| \nonumber \\
N(2,+)&= \la \mu_1^2, \mu_2^2, \sigma_1,\sigma_2\ra \leq G_2(r). &           &&&  \nonumber 
\end{align}
If $|\mu_1|=2r$, then each of $N(a), M(a)$ is normal in the full automorphism group $G(r)$ of $X(r)$, and if $a\mid r$, then
$N(2a)$ is a subgroup of $M(a)$ of index $2$; if $|\mu_1|=s$ is odd, then $N(a)\trianglelefteq G_{3Z}(s)$ (and $M(a)$ is not defined). We also consider:
\begin{align}\label{ld} 
J&=\la \mu_1\mu_2\ra         \ \cong {Z}_{t}, &\mbox{and}\quad  K &=\la \mu_1\mu_2^{-1},\tau\ra \cong {D}_{2t}
\end{align}
where $t=|\mu_1|\in\{2r,s\}$, and if $|\mu_1|=2r$, also
\begin{align}\label{ld+} 
J(+)&=\la \mu_1\mu_2, \mu_1^r\ra     &\mbox{and}\quad K(+)&=\la \mu_1\mu_2^{-1},\tau, \mu_1^r\ra. 
\end{align}


\noindent
If $t=2r$ then the four subgroups in \eqref{ld} and \eqref{ld+}  all contain $M(r)$ and are normal in $G_3(r)$, while if $t=s$
then $J, K$ are normal in $G_{3Z}(s)$.  
For an arbitrary subgroup $L$ of $G(r)$, we write 
$\ov L := LM(r)/M(r)$, so, for example, $H(r)=\ov{G(r)}$ and 
$H_k(r)=\ov{G_k(r)}$, and we also consider the subgroups $\ov{M(a)}$ and $\ov{N(a)}$. Note that 
\begin{equation}\label{nm}
M(r)\not\leq N(a)\quad \mbox{if and only if}\quad  \mbox{$\frac{2r}{a}$ is odd}\quad \mbox{if and only if}\quad  \mbox{$\ov{N(a)}=\ov{M(\frac{a}{2})}$}.    
\end{equation}

\begin{theorem}\label{thm:main} 
Let $(\Ga,G)$ be a graph-group pair 
$(X(r), G_k(r))$, $(Y(r),H_k(r))$,  or \\
$(Z(s), G_{3Z}(s))$, where 
$k\in\{1,2,3\}$,  $s\geq3$ is odd, $r\geq2$, and in the case of  $(Y(r),H_1(r))$, $r$ is even. 
\begin{enumerate}
\item[(a)] Then  $(\Ga,G)$ has proper non-degenerate normal quotients $(\Ga_N, G/N)$, for $(\Ga, G), N$ and the `Conditions' as in 
one of the lines of Table~$\ref{tbl:main}$. 
\item[(b)] also  $(\Ga,G)$ has degenerate normal quotients $(\Ga_N, G/N)$, for $N, G$ as in 
one of the lines of Table~$\ref{tbl:main2}$.  
\end{enumerate}
\end{theorem}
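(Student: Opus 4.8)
The plan is to handle each normal subgroup $N$ from the lists \eqref{na}, \eqref{ld}, \eqref{ld+} separately, computing the normal quotient $(\Ga_N,G/N)$ by first identifying the $N$-orbits on the vertex set and then reading off the induced adjacencies and the quotient group action. Normality of every candidate subgroup has already been recorded in the paragraphs following \eqref{na} and \eqref{ld+}, and \cite[Theorem 1.1]{janc1} guarantees that any non-degenerate normal quotient automatically inherits a $G/N$-invariant edge-orientation of valency four with $\Ga$ covering $\Ga_N$. So the real work is to name the isomorphism type of $\Ga_N$, identify the acting quotient group, and decide whether the quotient is degenerate.

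For the subgroups $N(a)$ and $M(a)$ I would argue entirely inside the translation lattice $\mathbb{Z}_{2r}\times\mathbb{Z}_{2r}$ (respectively $\mathbb{Z}_s\times\mathbb{Z}_s$). Since $N(a)=\la\mu_1^a,\mu_2^a\ra$ acts as the sublattice generated by $(a,0)$ and $(0,a)$, its orbits are indexed by $(\mathbb{Z}_{2r}/a\mathbb{Z})^2$ and the induced graph is $C_a\,\square\,C_a$. The crucial point, responsible for the ``cross-over'' advertised in the introduction, is the parity of $a$: when $a$ is even one recovers $X(a/2)$, whereas when $a$ is odd the quotient is $Z(a)$. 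Correspondingly $G_k(r)/N(a)$ is identified with $G_k(a/2)$ or with $G_{3Z}(a)$ by verifying that the images of $\mu_1,\mu_2,\sigma_1,\sigma_2,\tau$ satisfy the defining relations of the target group. The subgroup $M(a)=\la(\mu_1\mu_2)^a,\mu_1^{2a}\ra$ is treated the same way: its orbits form the coset lattice $\{(x,y): a\mid x,\ a\mid y,\ 2a\mid x-y\}$, which is exactly the lattice defining $Y(a)$, so $\Ga_{M(a)}\cong Y(a)$ and $G_k(r)/M(a)\cong H_k(a)$, recovering the defining construction of the $Y$-family as the case $a=r$.

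The degenerate quotients of part (b) come from the larger subgroups $J,K,J(+),K(+)$. Here $J=\la\mu_1\mu_2\ra$ collapses each diagonal $\{(i+k,j+k):k\}$ to a point, so the orbits are indexed by the difference $j-i\in\mathbb{Z}_t$; since the four neighbours of $(i,j)$ change $j-i$ only by $\pm1$, the quotient $\Ga_J$ is the cycle $C_t$, a degenerate quotient on which $G_3(r)/J$ acts as $D_{2t}$ (rotation coming from $\mu_1$, reflection from $\tau$). The subgroup $K=\la\mu_1\mu_2^{-1},\tau\ra$ instead collapses the anti-diagonals $i+j=\text{const}$, again giving the cycle $C_t$ but now with the purely cyclic quotient action $Z_t$ since $\tau\in K$; and $J(+),K(+)$ give the analogous cycles of length $r$ after additionally factoring out $\mu_1^r$. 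For the $Y$-graph entries one passes to the bar quotients $\ov{N(a)},\ov{M(a)}$, and \eqref{nm} controls precisely when $M(r)\le N(a)$ and hence whether the resulting $Y$-quotient is again a $Y$-graph or crosses over to an $X$- or $Z$-graph.

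I expect the main obstacle to be bookkeeping rather than any single hard idea: matching each quotient group $G/N$ with the named group in the target family requires checking that the generators descend to the prescribed generators with no unexpected fusion, and the parity-dependent splits ($X(a/2)$ versus $Z(a)$, together with the analogous $Y$-graph dichotomy governed by \eqref{nm}) must be tracked consistently across all three underlying families and all three edge-orientations $k\in\{1,2,3\}$. The small parameter values where a nominally four-valent quotient actually degenerates into a cycle or into $K_2$ (for instance $a\in\{1,2\}$) must also be separated out, and these boundary cases are exactly what distinguishes the entries of Table~\ref{tbl:main2} from those of Table~\ref{tbl:main}.
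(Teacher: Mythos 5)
Your treatment of part (a) and of the $J$, $K$, $J(+)$, $K(+)$ entries of part (b) is sound and essentially follows the paper's route: orbit-lattice computations identify $\Ga_{N(a)}$ with $X(a/2)$ or $Z(a)$ according to the parity of $a$, your direct description of the $M(a)$-orbits as $\{(x,y): a\mid x,\ a\mid y,\ 2a\mid x-y\}$ is correct and replaces the paper's two-step argument (quotient first by $N(2a)$, then by the image of $M(a)$, via Lemma~\ref{lem:quotiso}), and your use of \eqref{nm} together with the quotient-of-quotient correspondence handles the $Y(r)$ lines the same way the paper does. Your diagonal/anti-diagonal analysis for $J$ and $K$, including the observation that $\tau$ fixes each anti-diagonal setwise so that the induced group drops to $Z_t$, matches Lemma~\ref{lem:tbl2} exactly.

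However, there is a genuine gap in part (b). Your claim that ``the degenerate quotients of part (b) come from the larger subgroups $J,K,J(+),K(+)$'' is false: lines 1--4 of Table~\ref{tbl:main2} concern $N(2)$, $N(2,+)$ and their images in $H_k(r)$, and your closing remark about ``boundary cases $a\in\{1,2\}$'' does not supply the arguments these lines require. The essential point you miss is that the kernel of the action on the $N(2)$-orbits depends on the edge-orientation: for $k=1,3$ the quotient $4$-cycle carries edges in both directions between adjacent orbits, so it is unoriented, the kernel is $N(2)$ itself, and the induced group is $D_8$; but for $k=2$ the quotient $4$-cycle is oriented ($\Delta_{ee}\rightarrow\Delta_{eo}\rightarrow\Delta_{oo}\rightarrow\Delta_{oe}\rightarrow\Delta_{ee}$), so the kernel jumps to the strictly larger group $N(2,+)=\la\mu_1^2,\mu_2^2,\sigma_1,\sigma_2\ra$ and the induced group is $Z_4$, not $D_8$. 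The subgroup $N(2,+)$ never appears in your proposal, yet it is the normal subgroup named in line 2 of the table; without this kernel computation the entry $(C_4,Z_4)$ cannot be obtained. Likewise, the $(K_2,Z_2)$ entries (lines 3--4, for $r$ odd and $k=2,3$) require computing the preimages in $G_k(r)$ of $\ov{N(2,+)}$ and $\ov{N(2)}$ and checking that their vertex-orbits are the two sets $\Delta_{ee}\cup\Delta_{oo}$ and $\Delta_{eo}\cup\Delta_{oe}$ — a consequence of $M(r)\not\leq N(2)$ when $r$ is odd, via \eqref{nm} — and none of this appears in your outline. These are not bookkeeping matters: they are the only places in the proof of Theorem~\ref{thm:main}(b) where the conclusion depends on which of the three orientations is in play, and they are precisely what produces the distinct rows of Table~\ref{tbl:main2}.
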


We prove parts (a) and (b) of Theorem~\ref{thm:main} in Lemmas~\ref{lem:lines1-4} and \ref{lem:tbl2}, respectively.
We do not claim that Theorem~\ref{thm:main} classifies all the
normal quotients for these graph-group pairs.

\begin{table}
\begin{center}
{\openup 1\jot
\begin{tabular}{clll}
$(\Ga,G)$      &$N$   &$(\Ga_N,G/N)$                &Conditions on $k, a, r$  \\ \hline
$(X(r),G_k(r))$&$N(2a)$&$(X(a),G_k(a))$             &$a<r$   \\ 
$(X(r),G_k(r))$&$M(a)$&$(Y(a),H_k(a))$              &$\frac{2r}{a}$ even, and $a$ even if $k=1$   \\ 
$(X(r),G_3(r))$&$N(a)$&$(Z(a),G_{3Z}(a))$           &$a$ odd   \\ 
$(Y(r),H_k(r))$&$\ov{N(2a)}$&($X(a),G_k(a))$        &$\frac{r}{a}$ even    \\ 
$(Y(r),H_k(r))$&$\ov{M(a)}$&$(Y(a),H_k(a))$         &$\frac{2r}{a}>2$ even, and  $a$ even if $k=1$  \\ 
$(Y(r),H_3(r))$&$\ov{N(a)}$&$(Z(a),G_{3Z}(a))$      &$a$ odd   \\ 
$(Z(s),G_{3Z}(s))$&$N(a)$  &$(Z(a),G_{3Z}(a))$      &$a<s$    \\ 
\hline  
\end{tabular}
}
\end{center}
\caption{Non-degenerate normal quotients of $(\Ga,G)$ for Theorem~\ref{thm:main}, with $N$ as in \eqref{na} and $a>1$.
(Refer to Lemma~\ref{lem:lines1-4}.)} \label{tbl:main}
\end{table}

\begin{table}
\begin{center}
{\openup 1\jot
\begin{tabular}{cccll}
$N$ in $G_k(r)$ &$N$ in $H_k(r)$ &$N$ in $G_{3Z}(s)$ &$(\Ga_N,G/N)$ &Conditions            \\ \hline 
$N(2)$          &$\ov{N(2)}$     & --                &$(C_4,D_8)$   &$k=1,3$,             \\ 
                &                &                   &              &with $r$ even for $H_k(r)$ \\ 
$N(2,+)$        &$\ov{N(2,+)}$   &--                 &$(C_4,Z_4)$   &$k=2$                 \\ 
                &                &                   &              &with $r$ even for $H_k(r)$ \\ 
--              &$\ov{N(2,+)}$   &--                 &$(K_2,Z_2)$   &$k=2$, $r$ odd        \\ 
--              &$\ov{N(2)}$     &--                 &$(K_2,Z_2)$   &$k=3$, $r$ odd        \\ 
$J$             &$\ov{J}$        &$J$                &$(C_{t},D_{2t})$&$k=3$, $t\in\{2r,s\}$   \\ 
$K$             &$\ov{K}$        &$K$                &$(C_{t},Z_{t})$ &$k=3$, $t\in\{2r,s\}$   \\ 
$J(+)$          &$\ov{J(+)}$     &--                 &$(C_{r},D_{2r})$&$k=3$                   \\ 
$K(+)$          &$\ov{K(+)}$     &--                 &$(C_{r},Z_{r})$ &$k=3$                   \\ 
\hline  
\end{tabular}
}
\end{center}
\caption{Degenerate normal quotients of $(\Ga,G)$ for Theorem~\ref{thm:main}, with $N$ as in \eqref{na}, \eqref{ld} or \eqref{ld+}.
(Refer to Lemma~\ref{lem:tbl2}.)} \label{tbl:main2}
\end{table}

\begin{remark}\label{rem:xy}{\rm 
(a) When defining $(Z(s), G_{3Z}(s))$ with $s$ odd, we only consider the third edge-orientation since, for $s$ odd, 
elements of $\mathbb{Z}_s$ do not have a well-defined parity, and so the definitions of the 
first and second edge-orientations do not make sense for the graph $Z(s)$.

Also, when defining $(Y(r), H_k(r))$, we require that $r$ should be even when $k=1$, since when $k=1$ and $r$ is odd, 
there are oriented edges in both directions between adjacent $Y(r)$-vertices, 
so the first edge-orientation of $X(r)$ is not inherited by $Y(r)$.

\smallskip\noindent
(b) The nomenclature in (a.4) may seem clumsy. However we decided to keep the same names for the graphs $X(r), Y(r)$ 
in order to facilitate reference to \cite{MN, MP} for their other properties, as follows.
\begin{enumerate}
 \item[(i)]  It is pointed out in \cite[Section 2]{MP} and \cite[Page 161]{MN}, that for $r\geq3$, the oriented graph-group  
pairs $(X(r), G_k(r))$ are loosely attached with radius $2$, antipodally attached with radius $r$, and tightly 
attached with radius $2r$,  for $k=1, 2, 3$, respectively. Note that the graph $X(r)$ with the $k^{th}$ edge-orientation is called $X_k(r)$ in \cite{MP}. 
\item[(ii)] It is remarked in \cite[Section 2]{MP}, that if $r\geq3$, then the oriented graph-group  pairs $(Y(r), H_k(r))$ 
(with $r$ even if $k=1$) are loosely attached with radius $2$, antipodally attached with radius $r$, and tightly 
attached with radius $r$,  for $k=1, 2, 3$, respectively.  
\item[(iii)] The pairs $(X(r), G_2(r))$ and  $(Y(r), H_2(r))$ were also studied by Maru\v{s}i\v{c} and Nedela, 
where they were characterised in \cite[Props. 3.3, 3.4]{MN} as the only pairs with stabilisers of order at least 4, 
and such that every edge lies in precisely two oriented 4-cycles. 
\end{enumerate}

\smallskip\noindent
(c) By Theorems~\ref{thm:mainB} and~\ref{thm:main}, the graph-group pairs all have basic normal quotients, sometimes more than one. We summarise our findings.
\begin{enumerate}
 \item[(i)] $(X(r), G_1(r))$, and also  $(Y(r), H_1(r))$($r$ even), have basic normal quotients \\
$(Y(2), H_1(2))$ if $r$ is even and $(X(a), G_1(a))$ for odd primes $a\mid r$;
 \item[(ii)] $(X(r), G_2(r))$, and also  $(Y(r), H_2(r))$, have as basic normal quotients\\ $(Y(a), H_2(a))$ for primes $a\mid 2r$;
 \item[(iii)] $(X(r), G_3(r))$, and also  $(Y(r), H_3(r))$, have as basic normal quotients\\ $(Y(2), H_3(2))$ if $r$ is even and $(Z(a), G_{3Z}(a))$ for odd primes $a\mid r$;
 \item[(iv)] $(Z(s), G_{3Z}(s))$ ($s$ odd) has basic normal quotients $(Z(a), G_{3Z}(a))$ for odd primes $a\mid s$.
\end{enumerate}
}
\end{remark}


\section{Preliminaries: normal quotients of pairs in $\OG(4)$}\label{sec:prelim}

For fundamental graph theoretic concepts please refer to the book \cite{GR}, and for
fundamental notions about group actions please refer to the book \cite{Cam}. 
For permutations $g$ of a set $X$, we denote the image of $x\in X$ under $g$ by $x^g$.

A permutation group
on a set $X$ is \emph{semiregular} if only the identity element fixes a point of $X$; the group is 
\emph{regular} if it is transitive and semiregular.
If a permutation group $G$ on $X$ has a normal subgroup $K\trianglelefteq G$ such that $K$ is regular,
then (see \cite[Section 1.7]{Cam}) $G$ is a semidirect product $G=K.G_x$, where $G_x$ is the stabiliser of a point $x\in X$. 
Moreover, we may identify $X$ with $K$ in such a way that $x=1_K$, $K$ acts by right multiplication and $G_x$ acts by conjugation.
Many of the groups we study in this paper have this form.

As mentioned in Section~\ref{sec:intro}, normal quotients of graph-group pairs $(\Ga,G)\in\OG(4)$  
are usually of the form $(\Ga_N, G/N)$, they lie in $\OG(4)$, and $\Ga$ 
is a cover of $\Ga_N$, where $N \trianglelefteq G$. 
The only exceptions are the degenerate cases when $\Ga_N$ consists of a single vertex  
(if $N$ is transitive), or a single edge (if the $N$-orbits form the bipartition 
of a bipartite graph $\Ga$), or when $\Ga_N$ is a cycle possibly, but not necessarily, 
inheriting a $G$-orientation of its edges, \cite[Theorem 1.1]{janc1}.  
Thus $(\Ga,G)\in\OG(4)$ is \emph{basic} if all of its proper normal quotients
(that is, the ones with $N\ne1$) are degenerate. 

For a graph-group pair $(\Ga,G)\in\OG(4)$ and vertex $x$, the \emph{out-neighbours} of $x$ are the two 
vertices $y$ such that $x\rightarrow y$ is a $G$-oriented edge of $\Ga$.

An \emph{isomorphism} between graph-group pairs $(\Ga,G), (\Ga',G')$ is a pair $(f,\varphi)$ such that 
$f: \Ga\rightarrow \Ga'$ is a graph isomorphism, $\varphi: G\rightarrow G'$ is a group isomorphism,
and $(x^g)f=(xf)^{(g)\varphi}$ for all vertices $x$ of $\Ga$ and all $g\in G$.

\begin{lemma}\label{lem:quotiso} 
Suppose that $(\Ga,G)\in\OG(4)$, $N\trianglelefteq G$, and $(f,\varphi)$ is an isomorphism from 
$(\Ga_N,G/N)$ to $(\Ga',G')$. Let $\mathcal{M}$ be the set of  all normal subgroups $M$ of $G$ such that $N\leq M$ and $M$ is the kernel of
the $G$-action on the $M$-vertex-orbits in $\Ga$. Then, for each $M\in\mathcal{M}$, $(f,\varphi)$ induces an isomorphism
from $(\Ga_M, G/M)$ to $(\Ga'_{\ov M},G'/\ov M)$, where $\ov M = (M/N)\varphi$, and each normal quotient of $(\Ga',G')$ corresponds to exactly one 
such normal quotient $(\Ga_M, G/M)$.
\end{lemma}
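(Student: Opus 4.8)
The plan is to factor the passage to the $M$-quotient through the $N$-quotient and then transport the resulting data across the isomorphism $(f,\varphi)$. Since $N\le M$, each $M$-orbit on $V(\Ga)$ is a union of $N$-orbits, so the $M$-orbits on $V(\Ga)$ are precisely the orbits of $P:=M/N\trianglelefteq G/N$ acting on the vertex set of $\Ga_N$. This identifies $\Ga_M$ with the normal quotient $(\Ga_N)_P$ and, by the third isomorphism theorem, $G/M$ with $(G/N)/P$; under these identifications the $G/M$-action on $\Ga_M$ matches the $(G/N)/P$-action on $(\Ga_N)_P$.

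First I would record the matching of kernels. An element $gN\in G/N$ fixes every $P$-orbit on $V(\Ga_N)$ exactly when $g$ fixes every $M$-orbit on $V(\Ga)$; hence the kernel of the $(G/N)$-action on the $P$-orbits is $K/N$, where $K\trianglelefteq G$ is the kernel of the $G$-action on the $M$-orbits. As $N\le M\le K$ always holds, this gives $M=K$ if and only if $P$ is itself the kernel of the $(G/N)$-action on its orbits. Consequently $M\mapsto M/N$ is a bijection from $\mathcal M$ onto the set of normal subgroups $P\trianglelefteq G/N$ that determine genuine normal quotients of $(\Ga_N,G/N)$ (those on which the quotient group acts faithfully).

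The second ingredient is a transport lemma: for each $P\trianglelefteq G/N$ the isomorphism $(f,\varphi)$ induces an isomorphism $((\Ga_N)_P,(G/N)/P)\to(\Ga'_{P\varphi},G'/(P\varphi))$. Using the compatibility identity $(x^g)f=(xf)^{(g)\varphi}$, one checks that $f$ carries the $P$-orbit of each vertex $x$ bijectively onto the $(P\varphi)$-orbit of $xf$, so $f$ descends to a vertex bijection $\bar f$ between the two quotient graphs; this $\bar f$ preserves adjacency and orientation because $f$ does. Simultaneously $\varphi$ maps $P$ onto $P\varphi$ and so descends to a group isomorphism $\bar\varphi\colon (G/N)/P\to G'/(P\varphi)$, and evaluating on the $P$-orbit of a vertex $x$ confirms that $(\bar f,\bar\varphi)$ again satisfies the compatibility identity. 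Applying this with $P=M/N$, and using $P\varphi=\ov M$ together with the identifications of the first paragraph, yields the asserted isomorphism $(\Ga_M,G/M)\to(\Ga'_{\ov M},G'/\ov M)$.

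It remains to establish the one-to-one correspondence, for which I would combine the two bijections. Read at the level of kernels, the transport argument shows that $\varphi$ preserves the kernel condition: $P$ is the kernel of the $(G/N)$-action on its orbits if and only if $P\varphi$ is the kernel of the $G'$-action on its orbits, since $f$ matches the orbits and $\varphi$ matches the elements fixing them. Thus $P\mapsto P\varphi$ is a bijection from the normal subgroups of $G/N$ defining genuine normal quotients onto those of $G'$ defining normal quotients of $(\Ga',G')$; composing with the bijection $M\mapsto M/N$ of $\mathcal M$ shows that $M\mapsto\ov M$ is a bijection from $\mathcal M$ onto the normal subgroups governing the normal quotients of $(\Ga',G')$, which is exactly the claimed correspondence. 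I expect the main obstacle to be the kernel bookkeeping in the first and last paragraphs --- verifying that the defining condition of $\mathcal M$ translates correctly under both the quotient $G\to G/N$ and the isomorphism $\varphi$ --- whereas the transport of the isomorphism is routine once the compatibility identity is invoked.
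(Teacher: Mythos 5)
Your proof is correct and follows essentially the same route as the paper's: both transport normal subgroups through the correspondence theorem, induce the graph isomorphism on orbits via the compatibility identity $(x^g)f=(xf)^{(g)\varphi}$, and match kernels to obtain the bijection between normal quotients. The only difference is organizational---you make the intermediate identification $(\Ga_M,G/M)\cong((\Ga_N)_P,(G/N)/P)$ with $P=M/N$ explicit, which the paper's direct definition of the induced map $f_M$ (sending $x^M$ to the $\ov M$-orbit containing $(x^N)f$) leaves implicit.
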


\begin{proof}
The isomorphism $\varphi: G/N\rightarrow G'$ determines a one-to-one correspondence $M\mapsto (M/N)\varphi$ between 
the set of  all normal subgroups of $G$ which contain $N$, and the set of all normal subgroups of 
$G'$, and moreover, setting  $\ov M =(M/N)\varphi$, 
$\varphi$ induces an isomorphism $\varphi_M:G/M\rightarrow G'/\ov M$ given by 
$Mg\rightarrow \ov M(Ng)\varphi$. For normal quotient graphs $\Ga_M$, the induced group action is $G/\hat M$,
where $\hat M$ is the kernel of the $G$-action on the $M$-orbits. Thus $\Ga_M=\Ga_{\hat M}$, and the normal quotients of 
$(\Ga,G)$ relative to normal subgroups $M$ containing $N$ are precisely the normal quotients $(\Ga_M,G/M)$, for $M\in\mathcal{M}$.
 
For each $M\in\mathcal{M}$, the graph isomorphism $f:\Ga_N\rightarrow \Ga'$ induces a graph isomorphism $f_M:\Ga_M\rightarrow \Ga'_{\ov M}$,
where $f_M$ maps the $M$-vertex-orbit $x^M$ in $\Ga$ to the $\ov M$-vertex-orbit in $\Ga'$ containing $(x^N)f$.  
By the definition of $(f, \varphi)$ we have 
$((x^N)^{Ng})f=((x^N)f)^{(Ng)\varphi}$ for each vertex $x$ of $\Ga$ and each $g\in G$. It follows that
$((x^M)^{Mg})f_M=((x^M)f_M)^{(Mg)\varphi_M}$ for each vertex $x$ of $\Ga$ and each $g\in G$, and hence
$(f_M,\varphi_M)$ is an isomorphism from $(\Ga_M,G/M)$ to $(\Ga'_{\ov M},G'/\ov M)$.

This property of $(f,\varphi)$ implies that normal subgroups containing $N$ with the same vertex-orbits 
in $\Ga$ correspond to normal subgroups of $G'$ with the same vertex-orbits in $\Ga'$, and vice versa. 
Thus, on the one hand, distinct subgroups in $\mathcal M$ correspond to distinct normal quotients of $(\Ga',G')$. 
Also, on the other hand, each $K\trianglelefteq G'$ such that $K$ is the kernel of the $G'$-action 
on the $K$-vertex-orbits in $\Ga'$ is of the form $K=(M/N)\varphi$, where $N\leq M$ and $M$ is the kernel of
the $G$-action on the $M$-vertex orbits in $\Ga$.   
\end{proof}

\section{Each graph-group pair $(\Ga,G)$ in Theorem~\ref{thm:mainB} lies in $\OG(4)$}\label{sec:xir}  

First we consider $X(r)$ and $G_k(r)$ for $k\leq 3$ and $r\geq 3$. Recall the definitions of the edge orientations and the generators 
$\mu_1,\mu_2,\sigma_1,\sigma_2,\tau$, from Subsection~\ref{sub:results}~(a).
We make a few comments about the various edge orientations.

\begin{remark}\label{rem:xir} {\rm
We view the vertex set of $X(r)$ 
as a $2r\times 2r$ grid with rows and columns labeled $0, 1,\dots,2r-1$ (in this order, increasing to the right and increasing from top to bottom), and with the vertex $(i,j)$ in the \emph{row $i$, column $j$} position. The edges of $X(r)$ then are either \emph{horizontal} if they join vertices with equal first entries, or \emph{vertical} if they join vertices with equal second entries. 
It may be helpful to use this point of view when reading the proofs below. In particular it aids a description of the various edge orientations.

\begin{enumerate}
\item[(1)] In the first edge-orientation, for $i$ even, the horizontal edges in row $i$ are oriented from left to right 
(except of course that the edge from $(i,2r-1)$ is joined to $(i,0)$), and  for $i$ odd, the horizontal edges in row $i$ 
are oriented from right to left. Similarly,  for $j$ even, the vertical edges in column $j$ are oriented downwards and 
those in column $j$, for odd $j$, are oriented upwards.

\item[(2)] In the second edge-orientation, both the rows and the columns are alternating cycles, arranged in such a way that, for each $i,j$, the sequence 
$$(i,j), (i,j+1), (i+1,j+1), (i+1,j)$$ 
forms a directed $4$-cycle, oriented from left to right if $i+j$ is even, and from right to left if $i+j$ is odd. 

\item[(3)] Finally, in the third edge-orientation, all the horizontal edges are oriented from left to right, and all the vertical edges are oriented downwards.

\end{enumerate}
}
\end{remark}

\begin{lemma}\label{lem:x1} 
Let $k\leq 3$ and $r\geq 2$. Then the group $G_k(r)$ preserves the $k^{th}$ edge-orientation of $X(r)$. 
\end{lemma}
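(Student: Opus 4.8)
The plan is to use the fact that the orientation-preserving automorphisms of $X(r)$ form a subgroup of $G(r)$: if every \emph{generator} of $G_k(r)$ listed in Subsection~\ref{sub:results}(a) preserves the $k$th edge-orientation, then so does every element of $G_k(r)$. Hence it suffices to test each generator against each edge. I would organise the bookkeeping using the grid picture of Remark~\ref{rem:xir}: every edge is either \emph{horizontal}, joining $(i,j)$ to $(i,j+1)$ (equal first coordinate), or \emph{vertical}, joining $(i,j)$ to $(i+1,j)$. Each generator is already known to be a graph automorphism, so it permutes edges; I only need to track whether it sends horizontal edges to horizontal or to vertical edges, and whether it preserves or reverses the chosen direction on each.

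The key simplification is to record each orientation by a single invariant rule and then read off the effect of each generator. For $k=3$ the rule is simply that every horizontal edge increases $j$ and every vertical edge increases $i$; the translations $\mu_1,\mu_2$ plainly respect this, while $\tau\colon(i,j)\mapsto(j,i)$ interchanges horizontal and vertical edges and carries the increasing-$j$ direction to the increasing-$i$ direction, so it too is orientation-preserving. For $k=2$ I would rephrase the rule as: horizontal edges point from the endpoint with $i+j$ even to the endpoint with $i+j$ odd, and vertical edges point the opposite way (from odd sum to even sum). Then each generator either fixes both the edge-type and the parity of $i+j$ (as do $\mu_1\mu_2$, $\mu_1^2$, $\sigma_1$, $\sigma_2$, since $-i+j\equiv i+j\pmod 2$), or, as for $\tau\mu_1\colon(i,j)\mapsto(j+1,i)$, simultaneously swaps the edge-type and flips the parity of $i+j$, so that the two reversals cancel and the direction is preserved.

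For $k=1$ the rule is that a horizontal edge in row $i$ points in the increasing-$j$ direction exactly when $i$ is even, and a vertical edge in column $j$ points in the increasing-$i$ direction exactly when $j$ is even. Here $\mu_1\sigma_2\colon(i,j)\mapsto(i+1,-j)$ flips the parity of the row (reversing the horizontal convention) and at the same time reverses the direction of travel along each row via $j\mapsto -j$ (which preserves the parity of the column); these two sign changes compensate, and the symmetric computation handles $\mu_2\sigma_1\colon(i,j)\mapsto(-i,j+1)$ and the edge-swapping generator $\tau$. I expect this first orientation to be the main obstacle: unlike $k=2,3$, its convention alternates from row to row and column to column, so the verification forces one to confirm that a parity-changing translation and a direction-reversing reflection exactly cancel on each edge-type, which is where sign errors are most likely. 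The remaining points are routine: parities of $i$, $j$ and $i+j$ are well defined in $\mathbb{Z}_{2r}$ because $2r$ is even, so the rules are manifestly invariant modulo $2r$ and the wrap-around edges (e.g. from $(i,2r-1)$ to $(i,0)$) require no separate treatment.
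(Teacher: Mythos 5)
Your proposal is correct and follows essentially the same route as the paper's proof: reduce to checking the generators of $G_k(r)$ (since orientation-preserving automorphisms form a subgroup), then verify each generator on horizontal and vertical edges via parity arguments, which is exactly what the paper does case by case for $k=1,2,3$. Your restatement of each orientation as a single invariant rule (e.g.\ for $k=2$, horizontal edges point from the even-$(i+j)$ endpoint to the odd one, vertical edges the reverse) is just a tidier bookkeeping device for the same parity computations the paper carries out explicitly.
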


\begin{proof}
We consider the cases $k=1, 2, 3$ separately. In each case, for each generator of $G_k(r)$, we consider its 
actions  on a horizontal edge $E$  in row $i$ joining $(i,j)$ to $(i,j+1)$, and on a vertical 
edge $D$ in column $j$ joining $(i,j)$ to $(i+1,j)$. 

(1)  Firstly $\mu_1\sigma_2$ maps $E$ to the horizontal edge $E'$ joining $(i+1,-j)$ to $(i+1,-j-1)$, since $\mu_1\sigma_2$ moves row $i$ to row $i+1$ (by $\mu_1$) and then reflects it across a vertical axis through the $0^{th}$-column (by $\sigma_2$). Thus if $E$ is oriented from left to right, then $E'$ is oriented from right to left (and vice versa), and since horizontal edges in row $i$ and row $i+1$ have opposite orientations (see Remark~\ref{rem:xir}(1)), it follows that $\mu_1\sigma_2$ preserves the orientation of horizontal edges. Also $\mu_1\sigma_2$ maps $D$ to the vertical edge  joining $(i+1,-j)$ to $(i+2,-j)$, and since $j, -j$ have the same parity, edges in columns $j$ and $-j$ have the same orientation (see Remark~\ref{rem:xir}(1)). Thus $\mu_1\sigma_2$ preserves the first edge-orientation on all edges. 
An exactly similar argument (interchanging the roles of rows and columns) shows that $\mu_2\sigma_1$ also preserves the first edge-orientation. 

Finally $\tau$ swaps the horizontal  edge $E$ in row $i$ with the vertical edge $E'$ in column $i$ joining $(j,i)$ to $(j+1,i)$. If $i$ is even then $E$ is oriented from left to right and $E'$ is oriented downwards, so the orientation is preserved. Also, if $i$ is odd then $E$ is oriented from right to left and $E'$ is oriented upwards, and again the orientation is preserved. (For example, the oriented edge $(1,2)\leftarrow (1,3)$ is swapped with the oriented edge $(2,1) \leftarrow (3,1)$.) Similarly the action of $\tau$ preserves the orientation of $D$. Thus $\tau$ preserves the first edge-orientation. 

(2) Firstly $\mu_1^2$ maps $E$ to the horizontal edge $E'$ joining $(i+2,j)$ to $(i+2,j+1)$ and 
$E,E'$ have the same orientation; and $\mu_1^2$ maps $D$ to the vertical edge $D'$ joining $(i+2,j)$ 
to $(i+3,j)$ and $D,D'$ have the same orientation. Thus $\mu_1^2$ preserves the second edge-orientation. 
Similar arguments show that $\mu_2^2$ and $\mu_1\mu_2$ also preserve the second edge-orientation.
Next, $\sigma_1$ maps $E$ to the horizontal edge $E'$ in row $-i$ joining $(-i,j)$ to $(-i,j+1)$ and since $i+j$ and $-i+j$ have the same parity, the edges $E,E'$ have the same orientation; similarly $\sigma_1$ maps $D$ to the vertical edge $D'$, still in column $j$, joining $(-i,j)$ to $(-i-1,j)$ and we see again that $D,D'$ have the same orientation. Thus $\sigma_1$ preserves the second edge-orientation. An exactly similar argument shows that also $\sigma_2$ preserves the second edge-orientation. Finally $\tau\mu_1$ maps $E$ to the vertical edge $E'$ joining $(j+1,i)$ to $(j+2,i)$ and since $i+j$ and $j+1+i$ have opposite parities, the orientation of $E$ is preserved under the action of $\tau\mu_1$; and $\tau\mu_1$ maps $D$ to the horizontal edge $D'$ joining $(j+1,i)$ to $(j+1,i+1)$ and for the same reason  the orientation of $D$ is preserved under the action of $\tau\mu_1$. Thus $\tau\mu_1$ preserves the second edge-orientation.

(3) Firstly, since $\mu_1$ and $\mu_2$ map $E$ to a horizontal edge and $D$ to a vertical edge, 
and since all horizontal edges have the same orientation, and all vertical edges have the same 
orientation (see Remark~\ref{rem:xir}(3)), it follows that $\mu_1$ and $\mu_2$ preserve the 
third edge-orientation. Finally $\tau$ swaps horizontal and vertical edges, and since all 
horizontal edges are oriented from left to right, and all vertical edges are oriented downwards, 
it follows that $\tau$ also preserves the third edge-orientation.
\end{proof}

Next we prove membership of $\OG(4)$. Recall that, for a subgroup $L\leq G(r)$ we write $\ov L = LM(r)/M(r)$ for the corresponding subgroup of $H(r)=G(r)/M(r)$.

\begin{lemma}\label{lem:x2} 
Let $k\leq 3$, $r\geq 2$, and let $s>1$ be odd. Let $(\Ga,G)$ be one of $(X(r),G_k(r))$, $(Y(r),H_k(r))$ (with $r$ even if $k=1$), or $(Z(s),G_{3Z}(s))$.
Then $(\Ga,G)\in\OG(4)$.
\end{lemma}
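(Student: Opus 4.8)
The plan is to verify the three defining conditions for membership in $\OG(4)$ for each of the three pair types: that $\Ga$ is connected of valency four, that $G$ acts vertex- and edge-transitively, and that $G$ preserves an edge-orientation (equivalently, $G$ is not transitive on arcs). Connectivity and valency four are already recorded for the underlying graphs in Subsection~\ref{sub:results}: $X(r)$ has valency four for $r\geq2$, $Y(r)$ has valency four for $r\geq2$, and $Z(s)=C_s\,\square\,C_s$ has valency four for odd $s>1$. So the real content is the transitivity of the various groups $G_k(r)$, $H_k(r)$, $G_{3Z}(s)$, together with the preservation of orientation.

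First I would handle $(X(r),G_k(r))$. Orientation-preservation is already established in Lemma~\ref{lem:x1}, so it remains to check that each $G_k(r)$ is vertex- and edge-transitive but not arc-transitive. For vertex-transitivity I would note that $\mu_1,\mu_2$ generate a group acting regularly on the vertex set $\ZZ_{2r}\times\ZZ_{2r}$, and confirm that suitable powers or products of the listed generators lie in each $G_k(r)$ so that $\la\mu_1,\mu_2\ra$ (or a transitive subgroup thereof) is contained in $G_k(r)$; for instance $G_3(r)=\la\mu_1,\mu_2,\tau\ra$ contains $\la\mu_1,\mu_2\ra$ directly, while for $G_1(r)$ and $G_2(r)$ one checks that the translation subgroup they generate is still transitive on $\ZZ_{2r}\times\ZZ_{2r}$. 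For edge-transitivity, the key is that the vertex-stabiliser $G_k(r)_x$, computed in Subsection~\ref{sub:results}~(a), together with vertex-transitivity, moves the four edges at $x$ transitively: since $G_k(r)_x$ has order $2$ or $4$ and acts on the four incident edges, I would verify that its orbits on the out-neighbours of $x$, combined with the vertex-transitive action, give a single orbit on edges. Finally, $G$ fails arc-transitivity precisely because it preserves an orientation (Lemma~\ref{lem:x1}): the two out-edges and two in-edges at $x$ cannot be interchanged, so $G_k(r)_x$ has (at most) two orbits on the four arcs at $x$, confirming half-arc-transitivity.

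For $(Y(r),H_k(r))$ I would use the normal quotient machinery rather than arguing from scratch. Since $M(r)=\la(\mu_1\mu_2)^r\ra\leq G_k(r)$ (for the stated ranges of $k$ and $r$, with $r$ even when $k=1$), the pair $(Y(r),H_k(r))$ is literally the normal quotient of $(X(r),G_k(r))$ modulo $M(r)$, as asserted in Subsection~\ref{sub:results}~(b). By \cite[Theorem~1.1]{janc1}, provided the quotient is non-degenerate, $\Ga_N=Y(r)$ automatically has valency four, inherits the $k^{th}$ edge-orientation, and admits the vertex- and edge-transitive action of $H_k(r)=G_k(r)/M(r)$ preserving that orientation; hence $(Y(r),H_k(r))\in\OG(4)$. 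The one point needing care here is that the quotient is genuinely non-degenerate (not a single vertex, edge, or cycle), and that the orientation actually descends: this is exactly where the parity condition ``$r$ even if $k=1$'' enters, as explained in Remark~\ref{rem:xy}~(a), so I would explicitly invoke that condition to justify applying the cover theorem. The case $(Z(s),G_{3Z}(s))$ is then essentially identical to the $X(r)$ analysis with $2r$ replaced by the odd integer $s$: orientation-preservation follows by the same argument as case~(3) of Lemma~\ref{lem:x1}, $\la\mu_1,\mu_2\ra$ acts regularly on $\ZZ_s\times\ZZ_s$ giving vertex-transitivity, $\tau$ swaps horizontal and vertical edges giving edge-transitivity, and the orientation shows $G_{3Z}(s)$ is not arc-transitive.

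I expect the main obstacle to be edge-transitivity rather than vertex-transitivity: establishing that the small vertex-stabiliser really does fuse the horizontal and vertical edge-directions into a single $G$-orbit requires care, since for the first and second orientations the stabiliser action on neighbours is subtle (it must respect the orientation while still identifying a horizontal with a vertical edge via $\tau$ or $\tau\mu_1$). The cleanest route is to exhibit, for each $k$, a specific group element carrying a fixed horizontal edge at $x$ to a fixed vertical edge at $x$ and to check it lies in $G_k(r)$; combined with vertex-transitivity and the fact that the two horizontal (resp.\ vertical) edges at each vertex are swapped by a translation, this yields a single edge-orbit. Everything else — connectivity, valency, and the non-arc-transitivity — follows either from the structure of $C_{2r}\,\square\,C_{2r}$ (resp.\ $C_s\,\square\,C_s$) or immediately from Lemma~\ref{lem:x1}.
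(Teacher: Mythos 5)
Your overall decomposition — connectivity plus valency, orientation-preservation via Lemma~\ref{lem:x1}, non-arc-transitivity as a consequence of orientation-preservation, and the treatment of $(Y(r),H_k(r))$ as the normal quotient of $(X(r),G_k(r))$ modulo $M(r)$ via \cite[Theorem 1.1]{janc1} — matches the paper, and your arguments for $k=3$ and for $(Z(s),G_{3Z}(s))$ are sound. However, there is a genuine gap in your transitivity argument for $k=1$ and $k=2$: you propose to obtain vertex-transitivity from a transitive subgroup of $\la\mu_1,\mu_2\ra$ contained in $G_k(r)$, and no such subgroup exists. The translations lying in $G_1(r)=\la\mu_1\sigma_2,\mu_2\sigma_1,\tau\ra$ form exactly $N(2)=\la\mu_1^2,\mu_2^2\ra$, which has \emph{four} vertex-orbits (the parity classes $\Delta_{ee},\Delta_{eo},\Delta_{oe},\Delta_{oo}$ of \eqref{ee}), and the translations lying in $G_2(r)$ form $\la\mu_1\mu_2,\mu_1^2\ra$, which has \emph{two} orbits. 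Indeed, if $\mu_1$ or $\mu_2$ belonged to $G_1(r)$ or $G_2(r)$, that group would contain all five generators of $G(r)$ and hence equal $G(r)$, contradicting $|G_k(r)|<32r^2$. The same misconception undermines your edge-transitivity step: the element you want, carrying the edge $\{x,(0,-1)\}$ to $\{x,(0,1)\}$, cannot be taken to be the translation $\mu_2$, since $\mu_2\notin G_1(r)$ and $\mu_2\notin G_2(r)$; in $G_1(r)$ one must instead use the twisted element $\mu_2\sigma_1$.

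The paper closes precisely this gap by constructing, for each $k$, a regular normal subgroup $K\leq G_k(r)$ out of the mixed generators: $K=\la\mu_1\sigma_2,\mu_2\sigma_1\ra$ for $k=1$, and $K=\la\mu_1\mu_2,\mu_1^2,\tau\mu_1\ra$ for $k=2$. Each such $K$ contains $N(2)$, and vertex-transitivity is verified by checking that $K$ permutes the four $N(2)$-orbits transitively (for instance $x^{\mu_1\sigma_2}=(1,0)$, $x^{\mu_2\sigma_1}=(0,1)$, $(0,1)^{\mu_1\sigma_2}=(1,-1)$); regularity then follows from the order count $|K|=4r^2$. With $K$ regular and normal, $G_k(r)=K\cdot G_k(r)_x$, which simultaneously pins down the stabiliser (your appeal to the stabiliser structure stated in Subsection~\ref{sub:results}~(a) is mildly circular, since those facts are themselves established this way), and half-arc-transitivity reduces — by the regularity of $K$, essentially as in your final paragraph — to the single check that $G_k(r)_x$ fixes setwise and interchanges the two out-neighbours of $x$. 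Your proposal would become correct if the "translation subgroup" in your second paragraph were replaced by these twisted regular subgroups; as written, the check you propose for $k=1,2$ fails.
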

 

\begin{proof}
It is easy to check that the graphs $X(r)$ and $Z(s)$ are connected. 
Thus,  once we have proved that $G_k(r)$ acts half-arc-transitively on $X(r)$, and $G_{3Z}(s)$ 
 acts half-arc-transitively on $Z(s)$, it follows from Lemma~\ref{lem:x1} 
that  $(X(r),G_k(r))\in\OG(4)$ and   $(Z(s),G_{3Z}(s))\in\OG(4)$. Further, 
as $(Y(r),H_k(r))$ is a normal quotient of $(X(r),G_k(r))$ relative to $M(r)\cong Z_2$, 
its  membership in $\OG(4)$ follows from \cite[Theorem 1.1]{janc1}. 
It therefore remains to prove that the actions on $X(r)$ and $Z(s)$
are half-arc-transitive. First we consider $X(r)$. 

The subgroup $L:=N(2) = \la{\mu_1}^2\ra\times \la{\mu_2}^2\ra\leq G(r)$ is contained in $G_k(r)$, for each $k$, and the $L$-orbits on vertices are the following four subsets.
\begin{align}\label{ee} 
 {\Delta}_{ee} &= \{ (i,j)\ |\  i, j \hspace{.2cm} \mbox{are both even} \},\quad &{\Delta}_{eo} &= \{ (i,j)\ |\  i \hspace{.2cm}\mbox{is even}, \hspace{.2cm} j \hspace{.2cm} \mbox{is odd}\},\\
{\Delta}_{oe} &= \{ (i,j)\ |\ i \hspace{.2cm}\mbox{is odd}, \hspace{.2cm} j \hspace{.2cm} \mbox{is even}\},\quad &{\Delta}_{oo} &= \{ (i,j)\ |\ i, j \hspace{.2cm} \mbox{are both odd} \}. \nonumber
\end{align}
Writing $\bar g = Lg$ for each $g\in G(r)$, we note that  $G(r)/L = \la \bar\mu_1,\bar\mu_2,\bar\sigma_1,\bar\sigma_2,\bar\tau\ra \cong (Z_2\times Z_2)\wr Z_2$ of order $2^5$. 
Let $x:=(0,0)\in \Delta_{ee}$.
For each $k$ we find a normal subgroup $K$ of $G_k(r)$ such that $K$ contains $L$ and $K$ is regular on vertices. Hence $G_k(r)$ is the semidirect product $G_k(r)=K\cdot(G_k(r))_x$.
Then to prove  half-arc-transitivity, it is sufficient to prove that $(G_k(r))_x$ fixes setwise and interchanges the two out-neighbours of $x$. 
(This is sufficient since, for two arbitrary oriented edges, say $y\rightarrow z$ and $y'\rightarrow z'$, there are elements $g, g'\in K$ such that $y^g=x$, $(y')^{g'}=x$, so that $z^g, (z')^{g'}$ 
are possibly equal out-neighbours of $x$.)

\smallskip
(1) For the first edge-orientation we see that $K:=\la \mu_1\sigma_2,\mu_2\sigma_1\ra$ is normalised by $\tau$ and hence 
$K$ is normal in $G_1(r)$ with index 2, and $G_1(r)=K\cdot\la\tau\ra$. Since $|G_1(r)|=8r^2$ 
(see Subsection~\ref{sub:results}), we have $|K|=4r^2$. Moreover, since  
$x^{\mu_1\sigma_2} = (1,0)\in\Delta_{oe},$ $x^{\mu _2\sigma _1} = (0,1)\in\Delta_{eo}$, and $(0,1)^{\mu_1\sigma_2} = (1,-1)\in\Delta_{oo}$, 
it follows that $K$ permutes the $L$-orbits transitively, and hence $K$ is transitive on vertices. Now $|K|=4r^2$, and hence 
$K$ is regular on vertices. Thus the stabiliser $(G_1(r))_x$ has order 2 and so is equal to $\la\tau\ra\cong Z_2$. 
Finally $\tau$ interchanges the two out-neighbours $(0,1)$ and $(1,0)$ of $x$.

\smallskip
(2) For the second edge-orientation, the subgroup $K:=\la \mu_1\mu_2, \mu_1^2, \tau\mu_1\ra$ is normalised by $\sigma_1$ 
and $\sigma_2$ and hence is normal in $G_2(r)$. Also the subgroup $H:=\la\sigma_1,\sigma_2\ra\cong Z_2^2$ fixes the vertex 
$x$ and $H\cap K=1$. Since $|G_2(r)|=16r^2=|K|.|H|$, this  implies that $G_2(r)$ is the semidirect product $K\cdot H$,
and $|K|= 16r^2/|H|=4r^2$.  
Since $x^{\mu_1\mu_2} = (1,1)\in\Delta_{oo},$ $x^{\tau\mu_1} = (1,0)\in\Delta_{oe}$, and $(1,0)^{\mu_1\mu_2} = (2,1)\in\Delta_{eo}$, 
it follows that $K$ permutes the $L$-orbits transitively, and hence $K$ is transitive on vertices. 
Then since $|K|=4r^2$, it follows that $K$ is regular on vertices. Thus the stabiliser $(G_2(r))_x$ has order 
$|G_2(r):K|=4$ and so is equal to $H$. Finally $\sigma_2\in H$ interchanges the two out-neighbours $(0,1)$ and $(0,-1)$ of $x$.
 
\smallskip
(3) For the third edge-orientation, the subgroup $K:=\la \mu_1, \mu_2\ra$ is normalised by $\tau$ and hence is normal 
in $G_3(r)$ of index $2$, and $G_3(r)=K\cdot\la\tau\ra$ so $|K|=4r^2$. Moreover, since $x^{\mu_1} = (1,0)\in\Delta_{oe},$ 
$x^{\mu_2} = (0,1)\in\Delta_{eo}$, and $(0,1)^{\mu_1} = (1,1)\in\Delta_{oo}$, it follows that $K$ permutes the $L$-orbits 
transitively, and hence $K$ is transitive on vertices. 
Now $|K|=4r^2$, and hence $K$ is regular on vertices. Thus the stabiliser $(G_3(r))_x$ has order $2$ and so is equal to 
$\la\tau\ra\cong Z_2$. Finally $\tau$ interchanges the two out-neighbours $(0,1)$ and $(1,0)$ of $x$.
\smallskip

(4) Now we consider $Z(s)$ and  $G=G_{3Z}(s)$ with $s$ odd. It is straighforward to show that the subgroup $K=\la\mu_1,\mu_2\ra$ 
is regular on vertices, and $G=K.\la\tau\ra$ with $G_x=\la\tau\ra$. Also  $\tau$ interchanges the two out-neighbours $(0,1)$ and $(1,0)$ of $x$.
\end{proof}

\section{Classifying the edge-orientations}\label{sub:allorients}

In this section we prove Theorem~\ref{thm:allorients}. Suppose that $\Ga$ is one of the graphs 
$X(r), Y(r), Z(s)$ defined in Subsection~\ref{sub:results}, and that $H\leq\Aut\,\Ga$ acts half-arc-transitively. 
Then as discussed in Subsection~\ref{sec:alt}, $H$ preserves an edge-orientation of $\Ga$, and this edge-orientation is 
determined by $H$ up to reversing the orientation on each edge. Moreover, this edge-orientation and its `reverse' 
will be the same as those preserved by a subgroup of $\Aut\,\Ga$ which is maximal subject to containing $H$ and acting 
half-arc-transitively on $\Ga$. Thus proving  Theorem~\ref{thm:allorients} is equivalent to classifying all of the 
subgroups $H$ of $\Aut\,\Ga$ which are maximal subject to acting half-arc-transitively on $\Ga$. 
First we show that a proof of Theorem~\ref{thm:allorients} in
the case of $X(r)$ implies the result for the graph $Y(r)$. 


\begin{lemma}\label{lem:Yaut}
 If  $r\not\in\{1,2,4\}$, then $\Aut\,Y(r)$ is the group $G(r)/M(r)$ discussed
in Subsection~$\ref{sub:results} (b)$. Moreover, if the assertions of Theorem~$\ref{thm:allorients}$ 
hold for $X(r)$, then they hold also for $Y(r)$.
\end{lemma}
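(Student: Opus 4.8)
The plan is to treat the two assertions separately and to reduce the second to the first together with the covering $X(r)\to Y(r)$. Throughout I write $A:=\Aut\,Y(r)$ and use that $X(r)$ is a regular $Z_2$-cover of $Y(r)$ whose deck group is exactly $M(r)=\la(\mu_1\mu_2)^r\ra$ (this acts freely, since $(i,j)\mapsto(i+r,j+r)$ has no fixed vertex), so that $H(r)=G(r)/M(r)\le A$ acts arc-transitively on $Y(r)$.

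For the first assertion I need the reverse inclusion $A\le H(r)$, and the natural route is to show that every automorphism of $Y(r)$ lifts to an automorphism of $X(r)$. Granting this, the set of all lifts of all elements of $A$ forms a subgroup $\widetilde A\le\Aut\,X(r)=G(r)$ whose kernel onto $A$ is the deck group $M(r)$; hence $A=\widetilde A/M(r)\le G(r)/M(r)=H(r)$, and equality follows from $H(r)\le A$. To prove liftability I would invoke the standard criterion for regular covers: an automorphism of the base lifts precisely when it preserves the partition of the cycles of $Y(r)$ into those that do and those that do not lift to cycles of $X(r)$ (equivalently, it preserves the $Z_2$-voltage of every closed walk). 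A more self-contained substitute is to argue by four-cycle rigidity: for $r\ge3$ the four-cycles of $X(r)$ are exactly the unit squares $\{(i,j),(i,j+1),(i+1,j+1),(i+1,j)\}$, each edge lying in precisely two of them, and for large $r$ this descends unchanged to $Y(r)$. This rigidity forces every element of $A$ to permute the two parallel families of $2r$-cycles (the images of the rows and of the columns), and a group respecting this structure is seen to lie in $H(r)$. The excluded values $r\in\{1,2,4\}$ are exactly those admitting extra automorphisms ($Y(1)=K_2$, $Y(2)=K_{4,4}$, and a sporadic coincidence at $r=4$), where the four-cycle count or the voltage criterion degenerates, and they must be handled as genuine exceptions.

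For the second assertion, assume Theorem~\ref{thm:allorients} for $X(r)$ and let $H\le A=H(r)$ act half-arc-transitively on $Y(r)$, preserving an edge-orientation $\mathcal{O}$ of out-valency $2$. Lifting $H$ through the cover gives $\widetilde H\le G(r)$ with $M(r)\le\widetilde H$ and $\widetilde H/M(r)=H$. Pulling $\mathcal{O}$ back along the local isomorphism $X(r)\to Y(r)$ yields a $\widetilde H$-invariant orientation of $X(r)$, again of out-valency $2$. Since $\widetilde H\supseteq M(r)$ is transitive on both vertices and edges of $X(r)$ (it is transitive on blocks because $H$ is transitive on $Y(r)$, and $M(r)$ is transitive within each block) yet preserves a proper orientation, it acts half-arc-transitively on $X(r)$. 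Theorem~\ref{thm:allorients} for $X(r)$ then places $\mathcal{O}$, up to conjugacy in $G(r)$ and reversal, among the three defined orientations, so $\widetilde H\le (G_k(r))^{g}$ for some $k$ and some $g\in G(r)$. As $M(r)\trianglelefteq G(r)$ we have $M(r)=M(r)^{g}\le(G_k(r))^{g}$, and projecting modulo $M(r)$ gives $H\le(H_k(r))^{\bar g}$ with $\bar g=gM(r)$; since conjugation in $G(r)$ and orientation-reversal descend to the corresponding operations on $Y(r)$, this is exactly the statement of Theorem~\ref{thm:allorients} for $Y(r)$ (the case $k=1$, $r$ odd being automatically excluded, because then $M(r)\not\le G_1(r)$, consistent with Remark~\ref{rem:xy}(a)).

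The main obstacle is the first assertion: establishing $A\le H(r)$, i.e.\ that no automorphism of $Y(r)$ escapes $H(r)$. Both routes demand real work—the lifting route needs the voltage/cycle-space criterion checked for the specific cover $M(r)$, while the combinatorial route needs a careful enumeration of short cycles in $Y(r)$ both to isolate the two $2r$-cycle families and to pin down precisely the exceptional $r\in\{1,2,4\}$. By contrast, once the first assertion and the pull-back of orientations are in hand, the second assertion is a formal transfer through the cover.
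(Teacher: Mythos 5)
Your treatment of the second assertion is correct and is essentially the paper's own argument: write $\overline{H}=H/M(r)$ with $M(r)\le H\le G(r)$, pull the $Y(r)$-orientation back to an $H$-invariant orientation of $X(r)$ (both edges of $X(r)$ over a given $Y(r)$-edge receiving the same orientation), apply the $X(r)$-case of Theorem~\ref{thm:allorients} to place $H$ inside some $G_k(r)$ up to conjugacy and reversal, and observe that $k=1$ with $r$ odd cannot occur since then $M(r)\not\le G_1(r)$. That part of your proposal would stand.

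The genuine gap is the first assertion, $\Aut\,Y(r)\le G(r)/M(r)$, which is the main content of the lemma and which you explicitly leave unproved. Both of your proposed routes reduce to exactly the statement that needs establishing. The voltage-lifting criterion tells you that an automorphism of $Y(r)$ lifts to $X(r)$ if and only if it preserves the $Z_2$-voltages of closed walks; but to verify this for an \emph{arbitrary} element of $\Aut\,Y(r)$ you must already control what arbitrary automorphisms of $Y(r)$ can do, so the criterion merely restates the problem rather than solving it (and indeed the conclusion is false for $r\in\{1,2\}$, so no soft argument can succeed). In the four-cycle-rigidity route, the decisive sentence ``a group respecting this structure is seen to lie in $H(r)$'' is asserted, not proved, and your account of the exceptional values (calling $r=4$ ``a sporadic coincidence'') shows you have not located where such an argument actually uses $r\notin\{1,2,4\}$. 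The paper closes this gap by a direct, quantitative stabiliser computation inside $Y(r)$: writing $A=\Aut\,Y(r)$, $\overline{x}=\{(0,0),(r,r)\}$ with neighbours $\overline{y},\overline{y'},\overline{z},\overline{z'}$, it counts paths of length $2$ to show that $\overline{y'}$ is the unique neighbour of $\overline{x}$ having only one common neighbour with $\overline{y}$ (the other two have two), so $A_{\overline{x},\overline{y}}$ fixes $\overline{y'}$; then $A_{\overline{x},\overline{y},\overline{z}}$ fixes all four neighbours of $\overline{x}$, fixes the second common neighbours of $\overline{y}$ with $\overline{z}$ and with $\overline{z'}$, hence fixes all neighbours of $\overline{y}$, and by propagating this argument over the connected graph one gets $A_{\overline{x},\overline{y},\overline{z}}=1$. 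This yields $|A|\le 2\cdot(\mbox{number of arcs})=|G(r)/M(r)|$, forcing equality. Some argument of this concrete kind, in which the hypothesis on $r$ enters through explicit counts of $2$-paths, is what your proposal is missing; without it, neither assertion of the lemma is available, since your proof of the second assertion relies on the first to lift subgroups of $\Aut\,Y(r)$ into $G(r)$.
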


\begin{proof}
Write $\overline{G(r)}=G(r)/M(r)$, so  $\overline{G(r)}$ is an arc-transitive subgroup of 
$A:=\Aut\,Y(r)$, where $M(r)=\la (\mu_1\mu_2)^r\ra$ as in Subsection~\ref{sub:results}. 
We consider the vertex $\overline{x}=\{(0,0),(r,r)\}$ of $Y(r)$, and its four neighbours 
$\overline{y}=\{(1,0),(r+1,r)\}$, $\overline{y'}=\{(-1,0), (r-1,r)\}$, $\overline{z}=\{(0,1), (0,r+1)\}$ and 
$\overline{z'} = \{(0,-1),(r,r-1)\}$.  The stabiliser in $\overline{G(r)}$ of the arc $(\overline{x}, \overline{y})$ 
is the subgroup $\overline{\la\sigma_2\ra}$. Since  $r\not\in\{1,2,4\}$, there are two paths of length $2$
joining $\overline{y}$ to the vertex $v$, for $v=\overline{z}$ and $v=\overline{z'}$ but only 
one such path for $v=\overline{y'}$. It follows that $A_{\overline{x}, \overline{y}}$ must also fix 
$\overline{y'}$. 

Thus $A_{\overline{x},\overline{y},\overline{z}}$ has index $2$ in $A_{\overline{x},\overline{y}}$,
and fixes each of the four neighbours of $\overline{x}$.
Moreover, $A_{\overline{x},\overline{y},\overline{z}}$ fixes the second common neighbour 
$\{(1,1),(r+1,r+1)\}$ of $\overline{y}$ and $\overline{z}$, and also the second common neighbour 
$\{(1,-1),(r+1,r-1)\}$ of $\overline{y}$ and $\overline{z'}$. It follows that 
$A_{\overline{x},\overline{y},\overline{z}}$ fixes pointwise each of the neighbours of
$\overline{y}$. Repeating this argument we conclude that $A_{\overline{x},\overline{y},\overline{z}}=1$.
Thus $|A|$ is equal to twice the number of arcs, and hence $A=\overline{G(r)}$. 

To prove the last assertion, suppose that $\overline{H}\leq A$ and $\overline{H}$ is maximal subject to being
half-arc-transitive on $Y(r)$, and thus preserving a certain edge-orientation of $Y(r)$ (and its reverse edge-orientation).
Define an edge-orientation of $X(r)$ by orienting each edge of $X(r)$ according to 
the orientation of the corresponding edge of $Y(r)$. Since $X(r)$ is a double 
cover of $Y(r)$, this means that both $X(r)$-edges corresponding to a $Y(r)$-edge will have the same orientation.   
Now $\overline{H} = H/M(r)$ where $M(r)\leq H\leq G(r)$,
$H$ is half-arc-transitive on $X(r)$, and $H$ preserves this edge-orientation of $X(r)$. 
Thus, by assumption, replacing $H$ by a conjugate in $\Aut\,X(r)$ if necessary, 
this edge-orientation of $X(r)$ is one of the three defined in Subsection~\ref{sub:results}~(a)
or the reverse of one of these, and so $H\leq G_k(r)$ for some $k\in\{1,2,3\}$. 
Moreover if $k=1$, then $r$ is even, since in this case, if $r$ were odd then the two $X(r)$-edges 
corresponding to a $Y(r)$-edge would have opposite orientations (see Remark~\ref{rem:xy}~(a)). 
\end{proof}

By Lemma~\ref{lem:Yaut}, we may assume that $\Ga=X(r)$ or $Z(s)$, or in other words, that $\Ga=
C_t\,\square\, C_t$, where either $t=2r$, or $t=s$ (with $s$ odd).
Then $A:=\Aut\,\Ga=\la \mu_1,\mu_2,\sigma_1,\sigma_2,\tau\ra$, where the generators are defined as in
Subsection~\ref{sub:results}~(a). We use this notation for the rest of this subsection 
and we assume that  $H$ is a half-arc-transitive subgroup of $\Aut\,\Ga$. 
We let $x=(0,0)$, and 
denote its neighbours in $\Ga$ by $y=(1,0)$, $y'=(-1,0)$, $z=(0,1)$ and $z'=(0,-1)$.
First we obtain some restrictions on $H$.

\begin{lemma}\label{lem:h}
 \begin{enumerate}
  \item[(a)]  The group $H$ contains none of the elements $
\sigma_1\mu_1,\ \sigma_2\mu_2,$ $\sigma_1\sigma_2\mu_1,$ or $\sigma_1\sigma_2\mu_2.$

\item[(b)] Up to conjugation in $A$, the stabiliser $H_x$ is one of 
$\la\sigma_1\sigma_2\ra$, $\la\sigma_1,\sigma_2\ra$ or $\la\tau\ra$. 
 \end{enumerate}
\end{lemma}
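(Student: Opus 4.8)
The plan is to treat the two parts separately, since (a) is a short direct computation while (b) is an enumeration of the relevant subgroups of the vertex stabiliser $A_x$.

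For part (a), I would compute directly the action of each of the four named elements on the grid $\ZZ_t\times\ZZ_t$ and observe that each one swaps a pair of \emph{adjacent} vertices, hence reverses an edge. Explicitly, reading products left to right, $\sigma_1\mu_1$ and $\sigma_1\sigma_2\mu_1$ both send $(i,j)$ to $(1-i,\pm j)$ and therefore interchange $x=(0,0)$ with its neighbour $y=(1,0)$; similarly $\sigma_2\mu_2$ and $\sigma_1\sigma_2\mu_2$ send $(i,j)$ to $(\pm i,1-j)$ and interchange $x$ with $z=(0,1)$. This holds uniformly whether $t=2r$ or $t=s$. Since $H$ is half-arc-transitive it preserves an edge-orientation, so no element of $H$ can reverse an edge; hence none of the four elements lies in $H$, proving (a).

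For part (b), I would first record the action of $A_x=\la\sigma_1,\sigma_2,\tau\ra\cong D_8$ on the four neighbours $y,z,y',z'$ of $x$: one checks $\sigma_1=(y\ y')$, $\sigma_2=(z\ z')$, and $\tau=(y\ z)(y'\ z')$, so that $A_x$ is realised as the full symmetry group of the square whose vertices, in cyclic order, are $y,z,y',z'$, with $\tau\sigma_1=(y\ z\ y'\ z')$ the rotation of order four. Because $H$ preserves an orientation and is edge-transitive, it is transitive on the forward arcs and, via the reverse orientation, on the backward arcs; combined with vertex-transitivity this forces $H_x$ to be transitive on the two out-neighbours of $x$ and on the two in-neighbours. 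Thus $H_x$ must be a subgroup of $D_8$ having \emph{exactly} two orbits on $\{y,z,y',z'\}$, both of length two.

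I would finish by enumerating the subgroups of $D_8$ with this property. The cyclic subgroup $\la\tau\sigma_1\ra$ of order four and the Klein four-group generated by the two edge-reflections are transitive (one orbit), while $\la\sigma_1\ra$ and $\la\sigma_2\ra$ each fix a neighbour, so none of these qualifies; the surviving subgroups are $\la\sigma_1\sigma_2\ra$, the vertex-reflection Klein four-group $\la\sigma_1,\sigma_2\ra$, and the two edge-reflection subgroups $\la\tau\ra$ and $\la(y\ z')(y'\ z)\ra$. Since the latter two are conjugate via the order-four element $\tau\sigma_1\in A_x$, up to conjugation in $A$ the possibilities are exactly the three listed. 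I expect the only real bookkeeping to lie in part (b): correctly computing the permutation action of $A_x$ on the neighbours, justifying the precise two-orbits-of-length-two structure from half-arc-transitivity rather than from a merely invariant $2$--$2$ partition, and tracking the conjugacy of the two edge-reflection subgroups; part (a) should be immediate once the grid action of each product is written down.
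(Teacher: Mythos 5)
Your proof is correct and takes essentially the same route as the paper: in (a) you show the four elements reverse the arcs $(x,y)$ or $(x,z)$ and exclude them by half-arc-transitivity (the paper phrases the exclusion via non-arc-transitivity rather than orientation-reversal, which is the same point), and in (b) you reduce to the subgroups of $A_x\cong D_8$ having exactly two orbits of length two on the neighbours of $x$ and then fuse $\la\tau\ra$ with $\la\sigma_1\sigma_2\tau\ra$ by conjugation. The only cosmetic difference is the conjugating element (you use $\tau\sigma_1$, the paper uses $\sigma_1$) and your slightly fuller justification of the two-orbit condition.
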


\begin{proof}
 As noted in Subsection~\ref{sub:results}, $A_x = \la\sigma_1,\sigma_2,\tau\ra\cong D_8$,
and $A_{x,y}=\la\sigma_2\ra\cong Z_2$. There are thus exactly two elements of $A$ which reverse 
the arc $(x,y)$, and an easy computation shows these are the elements $\sigma_1\mu_1$
and $\sigma_1\sigma_2\mu_1$. Since $H$ is vertex-transitive, and edge-transitive, but 
not arc-transitive, these two elements do not lie in $H$. The same argument for the arc 
$(x,z)$ shows that $H$ does not contain  $\sigma_2\mu_2$
or $\sigma_1\sigma_2\mu_2$. This proves part (a).

Since $H$ is not arc-transitive, it is a proper subgroup of $A$, so $H_x$ 
is a proper subgroup of $A_x$, and $H_x$ has two orbits on $\{y,y'z,z'\}$,
the set of neighbours of $x$. If $|H_x|=4$ these properties imply that 
$H_x=\la\sigma_1,\sigma_2\ra$. The only other possibility is that $|H_x|=2$, 
so suppose this is the case. Of the five involutions in $A_x$ the only ones
which act on $\{y,y'z,z'\}$ with two cycles of length $2$ are $\sigma_1\sigma_2, 
\tau$ and $\sigma_1\sigma_2\tau$. Since $\tau^{\sigma_1} = \sigma_1\sigma_2\tau$, 
part (b) follows.
\end{proof}

This result allows us, in particular, to deal with the case where $H$ contains 
the subgroup $M:=\la\mu_1,\mu_2\ra$.

\begin{lemma}\label{lem:z}
If $H$ contains $M:=\la\mu)1,\mu_2\ra$, then up to conjugation in $A$,
\[
 H = \begin{cases}
      G_3(r) & \mbox{if $t=2r$}\\
      G_{3Z}(s) & \mbox{if $t=s$ is odd.}
     \end{cases}
\]
  In particular the edge-orientation is the one defined in Subsection~$\ref{sub:results} (a.3)$ or $(a.4)$,
or its reverse.  Moreover, all assertions in Theorem~\ref{thm:allorients} hold if $\Ga=Z(s)$.
\end{lemma}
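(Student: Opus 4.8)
The plan is to exploit that $M=\la\mu_1,\mu_2\ra$ is a \emph{regular normal} subgroup of $A$, reducing everything to the point stabiliser. First I would observe that $M\cong Z_t\times Z_t$ is the translation subgroup, acting regularly on the $t^2$ vertices of $\Ga$, and that $M\trianglelefteq A$; since $M\leq H$ we also have $M\trianglelefteq H$. Hence, by the principle recalled in Section~\ref{sec:prelim} applied to $H$ and its regular normal subgroup $M$, the group splits as $H=M\rtimes H_x$, where $H_x=H\cap A_x$ and $A_x=\la\sigma_1,\sigma_2,\tau\ra\cong D_8$. Thus $H$ is completely determined by $H_x\leq A_x$, and it remains only to identify $H_x$.

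Next I would invoke Lemma~\ref{lem:h}(b): as $H$ is half-arc-transitive, $H_x$ is a proper subgroup of $A_x$ equal, up to conjugation (by $\sigma_1\in A_x$, since $\tau^{\sigma_1}=\sigma_1\sigma_2\tau$), to one of $\la\sigma_1\sigma_2\ra$, $\la\sigma_1,\sigma_2\ra$ or $\la\tau\ra$. The hypothesis $M\leq H$ then enters through Lemma~\ref{lem:h}(a). If $H_x=\la\sigma_1,\sigma_2\ra$, then $\sigma_1\in H$ and $\mu_1\in M\leq H$ give $\sigma_1\mu_1\in H$, contradicting part~(a); if $H_x=\la\sigma_1\sigma_2\ra$, then similarly $\sigma_1\sigma_2\mu_1\in H$, again contradicting part~(a). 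So only $H_x=\la\tau\ra$ survives (its $A_x$-conjugate $\la\sigma_1\sigma_2\tau\ra$ merely yields a conjugate of $H$), giving $H=M\rtimes\la\tau\ra=\la\mu_1,\mu_2,\tau\ra$, i.e.\ $G_3(r)$ if $t=2r$ and $G_{3Z}(s)$ if $t=s$ is odd, up to conjugation in $A$. Since $G_3(r)$ preserves the third edge-orientation by Lemma~\ref{lem:x1}, and the orientation is determined by $H$ up to reversal, the edge-orientation is that of $(a.3)$/$(a.4)$ or its reverse.

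For the final assertion I must upgrade this to \emph{every} half-arc-transitive $H\leq A=\Aut\,Z(s)$, that is, show $M\leq H$ automatically when $t=s$ is odd. Here $|M|=s^2$ is coprime to $|A_x|=8$, so $M$ is a normal Hall subgroup of $A$ of odd order. For vertex-transitive $H$, orbit--stabiliser gives $|H|=s^2\,|H_x|$ with $|H_x|$ a power of $2$, so $s^2$ is the odd part of $|H|$; as $H\cap M\trianglelefteq H$ has odd order and $H/(H\cap M)\hookrightarrow A/M\cong D_8$ is a $2$-group, we obtain $|H\cap M|=s^2$, whence $M=H\cap M\leq H$. The body of the lemma then forces $H$ to be conjugate to $G_{3Z}(s)$, preserving $(a.4)$ or its reverse; since this is the only orientation defined for $Z(s)$, Theorem~\ref{thm:allorients} follows for $Z(s)$.

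I expect the main difficulty to be bookkeeping rather than depth: one must check that putting $H_x$ into standard form keeps $M\leq H$ (immediate, as $M\trianglelefteq A$) and that Lemma~\ref{lem:h}(a), established for arbitrary half-arc-transitive $H$, still applies to the conjugate. The one genuinely substantive point is the Hall-subgroup argument for $Z(s)$, where the oddness of $s$ is essential: when $t=2r$ the translations need not lie in $H$ and the other edge-orientations really do arise, which is precisely why this lemma is restricted to the case $M\leq H$.
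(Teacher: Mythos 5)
Your proof is correct and follows essentially the same route as the paper: the splitting $H=M\rtimes H_x$ from regularity of $M$, elimination of the stabiliser candidates $\la\sigma_1\sigma_2\ra$ and $\la\sigma_1,\sigma_2\ra$ via Lemma~\ref{lem:h} so that only $\la\tau\ra$ remains, and a Hall $2'$-subgroup argument to force $M\leq H$ when $\Ga=Z(s)$. Your write-up merely makes explicit two steps the paper leaves terse (the use of part~(a) together with $\mu_1\in H$, and the order count showing $H\cap M=M$), which is fine.
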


\begin{proof}
Suppose that $M\leq H$. Then $H=M\rtimes H_x$, as $M$ is regular on vertices, and it follows from 
Lemma~\ref{lem:h} that $H_x=\la\tau\ra$ (replacing $H$ by a conjugate if necessary). Thus $H$ is as claimed
and so is the edge-orientation. If $\Ga=Z(s)$ then $|A|=8s^2$ and the subgroup $M$ is the unique Hall 
$2'$-subgroup of $A$. Since $H$ is vertex transitive $H$ must contain $M$.
\end{proof}

We may therefore assume that $\Ga=X(r)$, and that $M\not\subseteq H$. Next we consider the case $\tau\in H$.

\begin{lemma}\label{lem:tau}
If $\Ga=X(r)$,  $M\not\subseteq H$, and $\tau\in H$, then up to conjugation in $A$,
$H=G_1(r)$ and the edge-orientation is as defined in Subsection~$\ref{sub:results} (a.1)$ or its reverse. 
\end{lemma}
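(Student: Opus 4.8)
The plan is to exploit the two structural facts we now have in hand: that $\tau\in H$ together with vertex-transitivity forces $H$ to contain a large piece of the translation subgroup, and that Lemma~\ref{lem:h} severely restricts what the stabiliser $H_x$ can be. First I would determine $H_x$. Since $\tau\in H_x$ and $H_x$ is one of $\la\sigma_1\sigma_2\ra$, $\la\sigma_1,\sigma_2\ra$ or $\la\tau\ra$ up to conjugacy (Lemma~\ref{lem:h}(b)), and since $\tau\notin\la\sigma_1\sigma_2\ra$ and $\tau\notin\la\sigma_1,\sigma_2\ra$, the presence of $\tau$ should pin down $H_x=\la\tau\ra\cong Z_2$ (after possibly conjugating by an element of $A_x$ to normalise the choice of $\tau$ versus $\sigma_1\sigma_2\tau$). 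It then follows that $H=T\rtimes\la\tau\ra$ where $T=H\cap M(r)^{\mathrm{transl}}$ is the translation part, more precisely $H$ is generated by $\la\tau\ra$ together with the elements of $H$ mapping $x$ to its various images.

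Next I would pin down the translation part. Because $H$ is vertex-transitive, for each neighbour of $x$ there is an element of $H$ realising the corresponding translation-type move, but Lemma~\ref{lem:h}(a) forbids $H$ from containing $\sigma_1\mu_1$, $\sigma_2\mu_2$, $\sigma_1\sigma_2\mu_1$ or $\sigma_1\sigma_2\mu_2$, and the hypothesis $M\not\subseteq H$ forbids the pure translations $\mu_1,\mu_2$ themselves. The key computation is to identify which elements of $A$ send $x=(0,0)$ to the neighbour $y=(1,0)$: these are exactly the four cosets representatives $\mu_1,\ \sigma_1\mu_1$ times $A_{x,y}=\la\sigma_2\ra$, and the forbidden list together with $M\not\subseteq H$ should leave $\mu_1\sigma_2$ as the only admissible element (and similarly $\mu_2\sigma_1$ for the move to $z=(0,1)$). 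This is the heart of the argument and matches the generators appearing in the definition of $G_1(r)$. I would then check that $\la\mu_1\sigma_2,\ \mu_2\sigma_1,\ \tau\ra=G_1(r)$ and that this subgroup is indeed half-arc-transitive (which we already know from Lemma~\ref{lem:x2}), so $H\leq G_1(r)$; comparing orders, or invoking maximality of $H$, gives $H=G_1(r)$.

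Finally, once $H=G_1(r)$ up to conjugacy, the edge-orientation it preserves is by definition the first edge-orientation of Subsection~\ref{sub:results}~(a.1) (or its reverse, depending on the global sign choice), which is exactly the claimed conclusion. The main obstacle I anticipate is the bookkeeping in the second paragraph: one must be careful that conjugating $H$ to arrange $H_x=\la\tau\ra$ does not spoil the standard forms of the forbidden elements in Lemma~\ref{lem:h}(a), since that lemma was stated for a fixed choice of generators. I would handle this by fixing the conjugate so that $\tau\in H_x$ at the outset and then re-deriving, if necessary, the analogues of the forbidden elements relative to this fixed frame, or by checking directly that the only vertex-transitive extension of $\la\tau\ra$ avoiding $\mu_1,\mu_2$ and the four forbidden arc-reversing elements is generated by $\mu_1\sigma_2$ and $\mu_2\sigma_1$. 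The restriction to $\Ga=X(r)$ (rather than $Z(s)$) matters here because parity of the coordinates is used implicitly when we rule out alternative admissible translations, so I would keep the even length $t=2r$ in view throughout.
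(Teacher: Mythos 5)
Your route is genuinely different from the paper's, and (after one repair) it works; it is in fact more direct. The paper argues globally: from $H_x=\la\tau\ra$ it computes $|M:M\cap H|=4$, deduces $MH=A$, then shows $M\cap H=\la\mu_1^2,\mu_2^2\ra$, and only at the end extracts $\sigma_1\mu_2$ and $\sigma_2\mu_1$ as members of $H$ from coset representatives. You instead argue locally at $x$: vertex-transitivity forces an element of $H$ carrying $x$ to $y=(1,0)$, the forbidden elements of Lemma~\ref{lem:h}(a) together with $M\not\subseteq H$ leave only $\mu_1\sigma_2$, and conjugation by $\tau$ plus an orbit--stabiliser count ($|H|=2\cdot 4r^2=|G_1(r)|$, since $H_x=\la\tau\ra$) finishes. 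This bypasses the paper's two longest claims entirely.

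The repair needed: your ``key computation'' is wrong as stated. The set of elements of $A$ carrying $x$ to $y$ is the full coset $A_x\mu_1$, of size $|A_x|=8$, not $4$: besides $\mu_1$, $\sigma_2\mu_1=\mu_1\sigma_2$, $\sigma_1\mu_1$ and $\sigma_1\sigma_2\mu_1$, it also contains $\tau\mu_1$, $\sigma_1\tau\mu_1$, $\sigma_2\tau\mu_1$ and $\sigma_1\sigma_2\tau\mu_1$, and your case analysis never excludes the possibility that $H$ meets only these latter four. The fix is immediate from the hypothesis $\tau\in H$: left-multiplying by $\tau$ gives $\tau(\tau\mu_1)=\mu_1$, $\tau(\sigma_1\tau\mu_1)=\sigma_2\mu_1$, $\tau(\sigma_2\tau\mu_1)=\sigma_1\mu_1$ and $\tau(\sigma_1\sigma_2\tau\mu_1)=\sigma_1\sigma_2\mu_1$, so $H$ meets $A_x\mu_1$ if and only if it contains one of your four elements, and your elimination then goes through (note that $\mu_1$ is excluded not by $M\not\subseteq H$ alone, but because $\mu_1,\tau\in H$ would give $\mu_2=\mu_1^\tau\in H$ and hence $M\leq H$). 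Separately, your preliminary claim that $H=T\rtimes\la\tau\ra$ with $T=H\cap\la\mu_1,\mu_2\ra$ is false --- for $H=G_1(r)$ that product has order $2r^2$, not $8r^2$ --- but it is not load-bearing: all your argument actually uses is vertex-transitivity, $H_x=\la\tau\ra$, and the order comparison.
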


\begin{proof}
Since $\tau\in H$, we have $H_x=\la\tau\ra$ by Lemma~\ref{lem:h}, and so $|A:H|=4$.
Also $|M:M\cap H|=|MH:H|$ divides $|A:H|=4$.

We claim that $|M:M\cap H|=4$. Suppose not. Then since  $M\not\subseteq H$, the subgroup $M\cap H$ 
has index $2$ in $M$, and as it is $\tau$-invariant, it follows that $M\cap H=\la\mu_1^2, \mu_2^2, \mu_1\mu_2\ra$. 
Now $MH/M \cong H/(M\cap H)$ has order $|H|/|M\cap H|=4$, and as $A_x$ is a transversal for $M$ in $A$, 
the $M$-cosets in $MH/M$ have representatives from a subgroup of $A_x$ of order $4$ containing $\tau$. 
The unique such subgroup is $\la\tau,\sigma_1\sigma_2\ra$. Since each of these $M$-cosets also has 
a representative from $H$, it follows that $H$ contains an element $\sigma_1\sigma_2\mu_1^i\mu_2^j$,
for some $i, j$. Since $H$ contains $\mu_1^2,\mu_2^2$ and $\mu_1\mu_2$, we may assume that $j=0$ and
$i\in\{0,1\}$. Part (a) of Lemma~\ref{lem:h} implies that $i\ne1$, while the fact that $H_x=\la\tau\ra$ 
implies that $i\ne 0$.  This is a contradiction, and hence  $|M:M\cap H|=4$.
Thus $|MH/M|=|H|/|M\cap H|=8$, so $MH=A$.

We next claim that $M\cap H=\la\mu_1^2,\mu_2^2\ra$. Since $M\cap H$ is $\tau$-invariant, this claim 
would follow if the projection of $M\cap H$ into $\la\mu_1\ra$ were contained in $\la\mu_1^2\ra$
(since this would imply that $M\cap H\subseteq \la\mu_1^2,\mu_2^2\ra$).  Suppose that this is not the case.
Then $M\cap H$ is a $\tau$-invariant, subdirect subgroup of $M=\la\mu_1\ra\times\la\mu_2\ra$ 
of index $4$, and it follows that $r$ is even and $M\cap H=\la \mu_1^4, \mu_2^4, \mu_1\mu_2^a \ra$, where 
$a\in\{1, -1\}$. Since $MH=A$, $H$ contains an element of the form $h=\sigma_2\mu_1^i\mu_2^j$ for some $i, j$,
and hence $H$ contains $\mu_1\mu_2^a(\mu_1\mu_2^a)^h = \mu_1\mu_2^a\mu_1\mu_2^{-a} = \mu_1^2$.
This is a contradiction, proving our second claim. 
 
Thus $M\cap H=\la\mu_1^2,\mu_2^2\ra$. Since $MH=A$, the subgroup $H$ contains an element $h'=\sigma_1\mu_1^i\mu_2^j$ 
for some $i, j$, and since $M\cap H$ contains $\mu_1^2, \mu_2^2$, we may assume that $i, j\in\{0,1\}$.
Since $\sigma_1\not\in H$, and also, by Lemma~\ref{lem:h}, $\sigma_1\mu_1\not\in H$, it follows that $j=1$.
If $h'=\sigma_1\mu_1\mu_2$, then $H$ also contains $(h')^\tau = \sigma_2\mu_2\mu_1$, and this implies that
$H$ contains $(h')^\tau(h')^{-1}=\sigma_2\sigma_1=\sigma_1\sigma_2$, which is a contradiction.
Thus $h'=\sigma_1\mu_2$. Then $H$ also contains $(h')^\tau = \sigma_2\mu_1$, and so the group $G_1(r)$ 
is contained in $H$. These groups have the same order, so $H=G_1(r)$ and the lemma follows.
\end{proof}

Thus from now on we may assume that $\tau\not\in H$ and hence, by Lemma~\ref{lem:h}, that
$H_x=\la\sigma_1\sigma_2\ra$ or $\la\sigma_1, \sigma_2\ra$.

\begin{lemma}\label{lem:sigsig}
If $\Ga=X(r)$,  $M\not\subseteq H$, and $H_x=\la\sigma_1, \sigma_2\ra$ or $\la\sigma_1\sigma_2\ra$, 
then up to conjugation in $A$,
$H\leq G_2(r)$ and the edge-orientation is as defined in Subsection~$\ref{sub:results} (a.2)$ or its reverse. 
\end{lemma}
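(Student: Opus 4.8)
The plan is to pin down the $H$-invariant orientation completely by a local propagation argument, and then to show that the full subgroup of $A$ preserving that orientation is exactly $G_2(r)$; both possibilities for $H_x$ are then handled uniformly. First I would note that the two candidate stabilisers induce the \emph{same} partition of the neighbour set $\{y,y',z,z'\}$ of $x$ into orbits, namely $\{y,y'\}$ and $\{z,z'\}$: for $H_x=\langle\sigma_1,\sigma_2\rangle$ this is because $\sigma_1$ interchanges $y,y'$ and $\sigma_2$ interchanges $z,z'$, while for $H_x=\langle\sigma_1\sigma_2\rangle$ the element $\sigma_1\sigma_2$ interchanges $y$ with $y'$ and $z$ with $z'$. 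The two out-neighbours of $x$ form one of these orbits, and since $\tau$ normalises each candidate stabiliser while interchanging $\{y,y'\}$ with $\{z,z'\}$, after replacing $H$ by its conjugate $H^\tau$ if necessary I may assume the out-neighbours of $x$ are $z=(0,1)$ and $z'=(0,-1)$, so that $x$ is incident to two outgoing horizontal edges.

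Next I would establish a local regularity of the orientation. For every vertex $v$, the stabiliser $H_v$ is conjugate to $H_x$, and the two out-neighbours of $v$ are the images of $z,z'$ under any element $g\in H$ with $x^g=v$. Writing $g$ in the form $a t$ with $a\in A_x$ and $t$ a translation in $M=\langle\mu_1,\mu_2\rangle$, the two horizontal edges at $x$ are carried to two horizontal edges at $v$ if $a\in\langle\sigma_1,\sigma_2\rangle$, and to two vertical edges at $v$ if $a\in\tau\langle\sigma_1,\sigma_2\rangle$ (the translation part is irrelevant to the edge type). In either case the two out-edges at $v$ have the \emph{same} type, so each vertex is unambiguously labelled \emph{horizontal-out} or \emph{vertical-out}. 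A short propagation then fixes the orientation: if $v$ is horizontal-out, both horizontal edges at $v$ point away from $v$, so each horizontal neighbour receives an incoming horizontal edge and is therefore vertical-out, and dually each vertical neighbour of $v$ is vertical-out as well; thus the label alternates across every edge. As $X(r)$ is connected and bipartite, the label is constant on each part, and since $x=(0,0)$ is horizontal-out the even part is horizontal-out and the odd part vertical-out. Comparing with Subsection~\ref{sub:results}~(a.2) shows this is exactly the second edge-orientation.

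Finally I would compute the full stabiliser $S\le A$ of this orientation. By Lemma~\ref{lem:x1} we have $G_2(r)\le S$, so $S$ is vertex-transitive and $|S|=|S_x|\cdot 4r^2$. Every element of $S_x\le A_x\cong D_8$ must fix the out-neighbour set $\{(0,1),(0,-1)\}$ of $x$; among the eight elements of $A_x$ exactly those in $\langle\sigma_1,\sigma_2\rangle$ do so, since each of the four elements of $\tau\langle\sigma_1,\sigma_2\rangle$ sends $(0,1)$ to a vertical neighbour $(\pm1,0)$. As $\langle\sigma_1,\sigma_2\rangle\le G_2(r)\le S$, this gives $S_x=\langle\sigma_1,\sigma_2\rangle$, hence $|S|=16r^2=|G_2(r)|$ and so $S=G_2(r)$. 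Therefore $H\le S=G_2(r)$. Undoing the possible conjugation by $\tau$ completes the proof: $\tau$ sends the second orientation to its reverse, so it normalises $S=G_2(r)$, and we conclude that, up to conjugation in $A$, $H\le G_2(r)$ with edge-orientation the second one or its reverse.

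I expect the main obstacle to be the propagation step, specifically justifying cleanly that at \emph{every} vertex the two out-edges share a type — this is exactly what makes the alternation argument valid — and then confirming that the forced label pattern is realised by the second orientation rather than producing an inconsistency. Once that geometric input is in hand, the order count for $S$ and the reduction via $\tau$ are routine.
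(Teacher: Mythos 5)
Your proof is correct, and it takes a genuinely different route from the paper's. The paper works entirely group-theoretically: writing $M=\la\mu_1,\mu_2\ra$ and $\delta=|M:M\cap H|$, it first shows that $M\cap H$ projects onto $\la\mu_i\ra$ for some $i$, eliminates $\delta=4$ using the forbidden elements of Lemma~\ref{lem:h}(a), pins down $M\cap H=\la\mu_1\mu_2,\mu_2^2\ra$, and then chases coset representatives of $M$ in $MH$ to force $H\leq G_2(r)$. You instead reconstruct the invariant edge-orientation itself: your two key observations --- that every element of $A$ either preserves or swaps the horizontal/vertical edge classes (so the two out-edges at each vertex share a type), and that the resulting horizontal-out/vertical-out labelling alternates across every edge and hence is constant on the bipartition classes of the connected bipartite graph $X(r)$ --- force the orientation to be exactly that of Subsection~\ref{sub:results}~(a.2) after the harmless conjugation by $\tau$; the orbit count $|S|=|S_x|\cdot 4r^2=16r^2=|G_2(r)|$ then identifies $G_2(r)$ as the full stabiliser $S$ of that orientation, giving $H\leq S=G_2(r)$. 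Both arguments are sound; the propagation step you flagged as the main obstacle does hold, precisely because the two out-edges at any vertex have a common type, so a vertex with an incoming horizontal edge cannot be horizontal-out, and dually. Your approach buys uniformity in the two cases for $H_x$ (only the orbit structure of $H_x$ on the neighbours of $x$ is used, so Lemma~\ref{lem:h}(a) and even the hypothesis $M\not\subseteq H$ play no role), and it yields the stronger byproduct that $G_2(r)$ is the full orientation-preserving subgroup of $A$; the paper's computation instead gives finer information about the subgroup $M\cap H$ itself, which is the kind of data one would want for a complete classification of the half-arc-transitive subgroups rather than just their maximal ones. A minor remark: your final ``undoing the conjugation'' step is redundant --- since the stabiliser of an orientation equals that of its reverse, and $\tau$ maps the second orientation to its reverse, $\tau$ normalises $G_2(r)$, so $H^\tau\leq G_2(r)$ already implies $H\leq G_2(r)$ --- but this costs nothing.
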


\begin{proof}
 Here $H_x=\la\sigma_1\sigma_2\ra$ or $\la\sigma_1, \sigma_2\ra$ and $|H|=8r^2$ or $16r^2$ respectively.
Let $\delta:=|M:M\cap H|$, so $\delta>1$ since $M\not\subseteq H$. 
Then $|H| = |H:M\cap H|\cdot |M\cap H|= |MH:M|\cdot (4r^2/\delta)$ divides $8\cdot (4r^2/\delta)$,
so $\delta \mid 4$ or $\delta=2$ according as  $|H|=8r^2$ or $16r^2$. Note that it is sufficient 
to prove that $H\leq G_2(r)$ (up to conjugation in $A$), since the edge-orientations preserved by 
$H$ and $G_2(r)$ are then the same as both act half-arc-transitively.
Let $\pi_i:M\rightarrow \la\mu_i\ra$ denote the natural projection map, for $i=1, 2$. 

\smallskip\noindent
\emph{We claim that 
$\pi_i(M\cap H) = \la\mu_i\ra$ for at least one $i$.} Suppose that this does not hold. Then, 
since $\delta\leq 4$, it follows that $M\cap H = \la \mu_1^2,\mu_2^2\ra$, $\delta=4$, so 
$H_x=\la\sigma_1\sigma_2\ra$ and $A=MH$. Since $A=MH$ it follows that $H$ contains elements of the 
form $h=\tau\mu_1^i\mu_2^j$ and $h'=\sigma_1\mu_1^{i'}\mu_2^{j'}$, and we may assume that $i,j,i',j'$ 
all lie on $\{0,1\}$ since $\mu_1^2, \mu_2^2$ lie in $H$. Since neither $\tau$ nor $\sigma_1$ lies in $H$, 
$(i,j)\ne (0,0) \ne (i',j')$. By Lemma~\ref{lem:h}~(a), $H$ does not contain either $\sigma_1\mu_1$
or $\sigma_2\mu_2 = (\sigma_1\sigma_2)(\sigma_1\mu_2)$, and hence the only possiblity for $h$ is
$\sigma_1\mu_1\mu_2$. If $h'=\tau\mu_1$ then $H$ contains $(h')^2 = \mu_2\mu_1$, which 
is a contradiction. Similarly $h'\ne \tau\mu_2$. Thus $h'=\tau\mu_1\mu_2$, but then $H$ contains
$h'h^{-1} = \tau\sigma_1$, which is a contradiction, proving the claim.

\smallskip
Replacing $H$ by its conjugate $H^\tau$ if necessary, we may assume that $M\cap H$ 
contains $\mu_1\mu_2^a$ for some $a$. Then $|M\cap H|=2r\,|H\cap\la\mu_2\ra|$,
and hence $H\cap \la\mu_2\ra=\la\mu_2^\delta\ra$ and
$M\cap H = \la\mu_1\mu_2^a, \mu_2^\delta\ra$, where $r$ is even if $\delta=4$. 
We may also assume that $0\leq a<\delta$. Moreover $a\ne0$, since otherwise 
$H$ contains $(\sigma_1\sigma_2)\mu_1$,
contradicting Lemma~\ref{lem:h}. 

\smallskip\noindent\emph{We claim that $\delta=2$.}
Suppose to the contrary that $\delta=4$. Then  $r$ is even, $H_x=\la\sigma_1\sigma_2\ra$,  
and $M\cap H = \la\mu_1\mu_2^a, \mu_2^4\ra$, with $a\in\{ 1, 2, 3\}$. 
The equations in the first paragraph imply that $|MH:M|=8$ so $A=MH$, and hence
$H$ contains elements of the form $h=\sigma_1\mu_2^b$ and $h'=\tau\mu_2^c$ for some $b, c\in\{1,2,3\}$
(adjusting by elements of $M\cap H$ and noting that $\tau, \sigma_1\not\in H$). Then
$M\cap H$ contains $\mu_1\mu_2^a (\mu_1\mu_2^a)^h = \mu_2^{2a}$, which implies that $a=2$,
and hence that $M\cap H$ contains $\mu_1^2$.  
Therefore  $M\cap H$ contains $(\mu_1\mu_2^2)^{h'}\mu_1^{-2}=\mu_2$, which is a contradiction.

\smallskip
Thus $\delta=2$, so $M\cap H = \la\mu_1\mu_2, \mu_2^2\ra$, 
and the equations in the first paragraph imply that $|MH:M|=4$ or $8$, when $|H|=8r^2$ or $16r^2$ respectively. 
Then $MH/M$ has order at least $4$. Suppose that $H$ contains an element of the form 
$h=\tau\mu_1^i\mu_2^j$; adjusting by an element of 
$M\cap H$ we may assume that $h=\tau \mu_1$ (since $H$ does not contain $\tau$). Then 
$H=\la M\cap H, H_x,\tau\mu_1\ra \leq \la \mu_1\mu_2, \mu_2^2, \sigma_1, \sigma_2, \tau\mu_1\ra
= G_2(r)$, and the lemma is proved in this case. If $H$ contains no such element then $|MH/M|=4$.
Next, if $H$ contains an element of the form $h=\tau\sigma_\ell\mu_1^i\mu_2^j$ (for $\ell=1$ or $2$), then
adjusting by an element of $M\cap H$ we may assume that $h=\tau\sigma_\ell \mu_1$ 
(since $H$ does not contain $\tau\sigma_\ell$), and again we find that $H=\la M\cap H, H_x,
\tau\sigma_\ell\mu_1\ra \leq G_2(r)$. The lemma follows in this case.  Thus we may assume that 
$MH/M$ projects to the subgroup $\la\sigma_1,\sigma_2\ra$ of $A_x$, and now
we obtain an element of the form $h=\sigma_1\mu_1^i\mu_2^j$ in $H$, and
$H=\la M\cap H, H_x, h\ra\leq G_2(r)$, completing the proof.
 \end{proof}

All the assertions of Theorem~\ref{thm:allorients} now follow from Lemmas~\ref{lem:Yaut}, 
\ref{lem:z}, \ref{lem:tau}, and~\ref{lem:sigsig}, completing its proof.

\section{Identifying normal quotients of $(\Ga,G)$ for Theorem~\ref{thm:main}}\label{sec:normalquots}  

We identify some of the normal quotients of these graph-group pairs. 
Note that whenever a normal subgroup $N$ of $G(r)$ is contained in $G_k(r)$ we can use it to form a normal quotient of $(X(r), G_k(r))$,
and moreover we can use Lemma~\ref{lem:quotiso}  to deduce information about normal quotients of $(Y(r), H_k(r))$ (taking $N=M(r)$) 
and about $(Z(s), G_{3Z}(s))$ (taking $r=s$ odd and $N=N(s)$).    
Recall the definitions of the subgroups in \eqref{na}, \eqref{ld} and \eqref{ld+}. First we deal with normal quotients modulo $N(a), M(a)$, 
and the corresponding subgroups of $H(r)=G(r)/M(r)$. 

\begin{lemma}\label{lem:lines1-4} 
For $\Ga, G, N$ as in one of the lines of Table~$\ref{tbl:main}$, the assertions about the normal quotient $(\Ga_N, G/N)$ are valid. 
\end{lemma}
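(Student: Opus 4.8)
The plan is to verify Table~\ref{tbl:main} line by line, exploiting the explicit generator descriptions to compute each normal quotient directly. For a given line the task is to confirm three things: first, that the listed $N$ is indeed a normal subgroup of $G$ contained in the relevant $G_k$ (or $\ov{G_k}$); second, that the quotient graph $\Ga_N$ is isomorphic to the claimed graph $X(a)$, $Y(a)$ or $Z(a)$; and third, that under this isomorphism the induced group $G/N$ matches the claimed group $G_k(a)$, $H_k(a)$ or $G_{3Z}(a)$ together with the correct inherited edge-orientation. For the first point I would use the relations already established: if $|\mu_1|=2r$ then $N(a)$ and $M(a)$ are normal in all of $G(r)$, and the containments $N(a),M(a)\le G_k(r)$ follow from the generator lists in Subsection~\ref{sub:results}~(a). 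The ``Conditions'' column (such as $\tfrac{2r}{a}$ even) exactly encodes when the relevant containment holds, via the parity criterion \eqref{nm}.

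For the quotient-graph identification, the key observation is that $N(2a)=\la\mu_1^{2a},\mu_2^{2a}\ra$ has vertex-orbits indexed by $\ZZ_{2a}\times\ZZ_{2a}$, since $\mu_i^{2a}$ shifts the $i$-th coordinate by $2a$; thus $\Ga_{N(2a)}$ has vertex set $\ZZ_{2a}\times\ZZ_{2a}$ with the Cartesian-product adjacency inherited from $X(r)$, which is precisely $X(a)=C_{2a}\,\square\,C_{2a}$. Likewise $N(a)$ (for $a$ odd dividing $s$) gives orbits indexed by $\ZZ_a\times\ZZ_a$, yielding $Z(a)$; and the orbits of $M(a)=\la(\mu_1\mu_2)^a,\mu_1^{2a}\ra$ pair up vertices as the two-element subsets defining $Y(a)$. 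In each case the images $\ov{\mu_1},\ov{\mu_2},\ov{\tau},\ov{\sigma_i}$ act on the quotient exactly as the generators $\mu_1,\mu_2,\tau,\sigma_i$ of $G(a)$ do on the smaller grid, so $G/N$ is generated by the corresponding images and hence equals the target group $G_k(a)$, $H_k(a)$ or $G_{3Z}(a)$. That the quotient is non-degenerate of valency four, inherits the edge-orientation, and that $\Ga$ covers $\Ga_N$, all follow from \cite[Theorem 1.1]{janc1} once one checks $\Ga_N$ is not a single vertex, edge or cycle, which holds because $a>1$ and the quotient grid is genuinely two-dimensional.

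For the lines concerning $(Y(r),H_k(r))$ the cleanest route is Lemma~\ref{lem:quotiso} rather than recomputing from scratch: since $(Y(r),H_k(r))=(X(r)_{M(r)},\ov{G_k(r)})$, each normal subgroup $\ov{N}$ of $H_k(r)$ lifts to a normal subgroup $N\ge M(r)$ of $G_k(r)$, and the normal quotient $(Y(r)_{\ov N},H_k(r)/\ov N)$ is isomorphic to $(X(r)_N,G_k(r)/N)$. Thus the $Y$-lines are deduced from the already-verified $X$-lines by checking the containment relations between $M(r)$, $N(2a)$ and $M(a)$ — here the index-$2$ fact that $N(2a)\le M(a)$ when $a\mid r$, and the equivalence \eqref{nm} relating $\ov{N(a)}$ to $\ov{M(a/2)}$, do the bookkeeping. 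I expect the main obstacle to be precisely this bookkeeping: correctly tracking when $M(r)$ is or is not contained in $N(a)$ (which toggles whether $\ov{N(a)}$ collapses to an $\ov{M}$-type subgroup), and confirming for the $k=1$ case that the parity condition ``$a$ even'' is genuinely needed so that the first edge-orientation survives in the quotient, as flagged in Remark~\ref{rem:xy}~(a). The geometric identifications themselves are routine given Remark~\ref{rem:xir}; the delicacy lies in matching groups and orientations through the parity constraints.
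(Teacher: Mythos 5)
Your proposal is correct and takes essentially the same route as the paper's own proof: direct computation of the $N(2a)$- and $N(a)$-orbit grids to identify the $X$- and $Z$-quotients, matching of the induced group via the images of the generators, appeal to \cite[Theorem 1.1]{janc1} for the valency, orientation, covering and kernel facts, and Lemma~\ref{lem:quotiso} together with the criterion \eqref{nm} to transfer the results to the $Y$-lines. The only small divergence is line 2 of Table~\ref{tbl:main}, where you identify the $M(a)$-orbits directly as the vertex-pairs defining $Y(a)$, whereas the paper deduces this by applying Lemma~\ref{lem:quotiso} to the already-established quotient $(X(a),G_k(a))$ and invoking the definition of $Y(a)$; both arguments are sound and amount to the same computation.
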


\begin{proof}
Let $G=G_k(r)$ or $G_{3Z}(s)$, and let $\Ga=X(r)$ or $Z(s)$, so $\Ga$ has vertex set $\mathbb{Z}_t\times\mathbb{Z}_t$, where $t=2r$ or $s$ respectively.
Consider the action on vertices of the normal subgroup $N=N(a)$ of $G$, where $a\mid t$ and $2<a<t$.
The $N$-orbits are the $a^2$ subsets
\[
\Delta_{i,j} =\{(i' j')\mid i'\equiv i\pmod{a},\ j'\equiv j\pmod{a}\} 
\]
for $i,j\in\mathbb{Z}_{a}$. Since $(i',j')$ is adjacent in $\Ga$ to $(i'\pm1, j')$ and 
$(i', j'\pm 1)$, for all $i',j'\in\mathbb{Z}_{t}$, it follows that $\Delta_{i,j}$ is 
adjacent in the quotient graph $\Ga_{N}$ to $\Delta_{i\pm1,j}$ and $\Delta_{i,j\pm1}$, 
for each $i,j\in\mathbb{Z}_{a}$. Thus, since $a>2$, $\Ga_{N}$  has valency $4$, and the mapping 
$f:\Delta_{i,j}\longrightarrow (i,j)$ defines a graph isomorphism from $\Ga_N$ to $X(a/2)$ if $a$ is 
even, or to $Z(a)$ if $a$ is odd. 
By \cite[Theorem 1.1]{janc1}, the group induced by $G$ on the quotient $\Ga_{N}$ is precisely $G/N$, and  
$(\Ga_{N},G/N)\in\OG(4)$. In particular $N$ is the kernel of the $G$-action on the set of $N$-orbits in $\Ga$.
Write $\bar g:= Ng$ for elements of the quotient group $G/N$. 

If $a$ is even, then it follows from the definitions 
of the generators $\mu_1,\mu_2,\sigma_1,\sigma_2,\tau$ of $G(r)$, given in Subsection~\ref{sub:results},  
that the induced maps $\bar\mu_1, \bar\mu_2, \bar\sigma_1, \bar\sigma_2, \bar\tau$ acting on $\Ga_N$ 
correspond to the respective generators for the smaller group $G(a/2)$ acting on $X(a/2)$. This natural 
correspondence defines a group isomorphism $G(r)/N\rightarrow G(a/2)$ which restricts to an 
isomorphism $\varphi: G/N\rightarrow G_k(a/2)$. We conclude that $(f,\varphi)$ defines an isomorphism 
from $(\Ga_{N},G/N)$ to $(X(a/2), G_k(a/2))$. Thus the first line of Table~\ref{tbl:main} is valid.

Similarly if $a$ is odd, then the maps $\bar\mu_1, \bar\mu_2, \bar\tau$ acting on $\Ga_N$
induced from the generators $\mu_1,\mu_2,\tau$ (for $G_3(r)$ or $G_{3Z}(s)$) correspond to the respective generators 
for the smaller group $G_{3Z}(a)$ acting on $Z(a)$, and we obtain  an isomorphism 
from $(\Ga_{N},G/N)$ to $(Z(a), G_{3Z}(a))$. Thus lines 3 and 7 of Table~\ref{tbl:main} are also valid.

Suppose now that $t=2r$ above, so $\Ga= X(r)$. If $a$ and $\frac{2r}{a}$ are both even then by \eqref{nm}, $N(a)$ contains $M(r)$, and it follows 
by Lemma~\ref{lem:quotiso} (taking $N=M(r)$ in that result) that the quotient of $(Y(r),H_k(r))$ modulo $\ov{N(a)}$ is isomorphic to
the quotient of $(X(r),G_k(r))$ modulo $N(a)$, and we have just shown that this latter quotient 
is isomorphic to $(X(a/2), G_k(a/2))$. Thus line 4 of Table~\ref{tbl:main} is valid. Similarly if $a$ is odd then 
$\frac{2r}{a}$ is even, and again by \eqref{nm}, $N(a)$ contains $M(r)$. The same argument now yields that
the quotient of $(Y(r),H_k(r))$ modulo $\ov{N(a)}$ is isomorphic to $(Z(a), G_{3Z}(a))$, proving that
line 6 of Table~\ref{tbl:main} is valid.

It remains to consider lines 2 and 5 of Table~\ref{tbl:main}. We continue to let $(\Ga,G)$ 
be the pair $(X(r),G_k(r))$,
and we note that, if a  normal quotient of $(\Ga,G)$ modulo $M$ is 4-valent, then by 
\cite[Theorem 1.1]{janc1}, $M$ is the kernel of the $G$-action on the set of $M$-vertex-orbits in $\Ga$. 
Consider now $M=M(a)$ where $a\mid r$ (so $\frac{2r}{a}$ is even) and $1<a\leq r$. 
Since $M\subseteq G=G_k(r)$, we must have $a$ even when $k=1$. 
Applying Lemma~\ref{lem:quotiso} with $(\Ga',G')=(X(a),G_k(a))$ and $N=N(2a)$, we find that
the quotient of $(\Ga,G)$ modulo $M$ is isomorphic to the quotient of $(X(a),G_k(a))$ modulo
the image of $M$ under projection from $G$ to $G/N\cong G_k(a)$, namely $\la (\bar\mu_1\bar\mu_2)^a\ra$.
Thus the latter normal quotient is $(Y(a),H_k(a))$, proving that line 2 of Table~\ref{tbl:main} is valid. 

For the final line, line 5, we apply  Lemma~\ref{lem:quotiso} with $(\Ga',G')=(Y(r),H_k(r))$ and $N=M(r)$, where $r$ is even when $k=1$.
Consider   $M=M(a)$,  where $a\mid r$ and $1<a< r$, and note that $N\leq M$. We wish to take the  quotient of $(\Ga',G')$ modulo 
$\ov M = M(a)M(r)/M(r)$, and we note that $1<\ov M\leq H_k(r)$ if and only if $a<r$, and also $a$ is even when $k=1$.  
Suppose this is the case. Then by Lemma~\ref{lem:quotiso}, 
the quotient of $(\Ga,G)$ modulo $M$ is isomorphic to the quotient of $(Y(r),H_k(r))$ modulo
the image $\ov M$ of $M(a)$ under projection from $G$ to $G/N\cong H_k(r)$. We already proved that the 
former normal quotient $(\Ga_M,G/M)$ is isomorphic to $(Y(a), H_k(a))$. This proves 
that line 5 of Table~\ref{tbl:main} is valid. 
\end{proof}

Next we consider normal quotients in Table~\ref{tbl:main2}. Recall the definitions of the subgroups in \eqref{na}, \eqref{ld}  and \eqref{ld+}.

\begin{lemma}\label{lem:tbl2} 
For $\Ga, G$ as in Theorem~$\ref{thm:main}(b)$, and for $N$ as in one of the lines of Table~$\ref{tbl:main2}$, the assertions about the normal quotient $(\Ga_N, G/N)$ are valid. 
\end{lemma}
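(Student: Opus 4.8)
The plan is to treat the final statement (Lemma~\ref{lem:tbl2}) by verifying each of the nine lines of Table~\ref{tbl:main2} essentially one at a time, exploiting the same two engines used for Lemma~\ref{lem:lines1-4}: direct computation of the $N$-orbits on $\mathbb{Z}_t\times\mathbb{Z}_t$, and the transfer principle of Lemma~\ref{lem:quotiso} to pass from the pair $(X(r),G_k(r))$ to the pairs $(Y(r),H_k(r))$ and $(Z(s),G_{3Z}(s))$. The degenerate cases split naturally into two groups: the ``small'' quotients of type $C_4$ or $K_2$, coming from $N(2)$, $N(2,+)$ and their images, and the ``cyclic'' quotients of type $C_t$ or $C_r$, coming from the subgroups $J,K,J(+),K(+)$ in \eqref{ld} and \eqref{ld+}.

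First I would handle the $C_4$ lines. For $N=N(2)=\la\mu_1^2,\mu_2^2\ra$ the orbits on vertices are exactly the four sets $\Delta_{ee},\Delta_{eo},\Delta_{oe},\Delta_{oo}$ of \eqref{ee}, and I would show directly that the quotient graph on these four orbits is $C_4$ (each orbit is adjacent to precisely two others). To pin down the induced group I would compute the image $G_k(r)/N$ acting on these four blocks: for $k=1,3$ the stabiliser contribution from $\la\tau\ra$ together with the translations forces the quotient to be $D_8$, whereas for $k=2$ the extra generators $\sigma_1,\sigma_2$ collapse differently and, using $N(2,+)=\la\mu_1^2,\mu_2^2,\sigma_1,\sigma_2\ra$, the image is the cyclic group $Z_4$. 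The corresponding $Y(r)$ lines follow by Lemma~\ref{lem:quotiso}: since $M(r)\le N(2)$ when $r$ is even, the quotient of $(Y(r),H_k(r))$ modulo $\ov{N(2)}$ matches the quotient of $(X(r),G_k(r))$ modulo $N(2)$. For the two $K_2$ lines ($r$ odd), I would invoke \eqref{nm}: when $\tfrac{2r}{a}$ is odd we have $M(r)\not\le N(a)$ and $\ov{N(2)}=\ov{M(1)}$ (resp.\ $\ov{N(2,+)}$ collapses similarly), so the $\ov{N}$-orbits on $Y(r)$ form the bipartition of the bipartite graph $Y(r)$, giving the degenerate quotient $(K_2,Z_2)$.

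Next I would treat the cyclic lines. For $J=\la\mu_1\mu_2\ra\cong Z_t$ the orbits of $J$ on $\mathbb{Z}_t\times\mathbb{Z}_t$ are the diagonal cosets $\{(i+m,j+m)\mid m\in\mathbb{Z}_t\}$, which are indexed by the difference $i-j\in\mathbb{Z}_t$; the quotient graph is then the cycle $C_t$, and since $J$ is normal in $G_3(r)$ (resp.\ $G_{3Z}(s)$) with $\la\tau\ra$ surviving in the quotient, the induced group is $D_{2t}$. Replacing $J$ by $K=\la\mu_1\mu_2^{-1},\tau\ra\cong D_{2t}$ gives the same cycle $C_t$ but now with the reflection absorbed into $N$, so the quotient group is the rotation group $Z_t$. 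The $+$ variants $J(+),K(+)$ contain the extra element $\mu_1^r$, which halves the diagonal index set, yielding $C_r$ with groups $D_{2r}$ and $Z_r$ respectively; here I must record that these are defined only when $t=2r$, matching the table's restriction. The $Y(r)$ and $Z(s)$ entries then follow by the normality statements recorded after \eqref{ld+} together with Lemma~\ref{lem:quotiso}.

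I expect the main obstacle to be the bookkeeping in identifying the induced quotient group correctly in each case, rather than the graph-theoretic quotient, which is routine. The subtle point is distinguishing $D_{2t}$ from $Z_t$ (and $D_8$ from $Z_4$): this hinges on whether the reflection $\tau$ (or the $\sigma_i$) acts nontrivially on the cycle of orbits or has been swallowed into $N$. I would therefore be careful, for each line, to compute the image in $\Aut(\Ga_N)$ of a generating set of $G$ modulo $N$, checking which generators act as rotations and which as reflections on the quotient cycle, and to confirm in the $+$ cases that $\mu_1^r\in N$ genuinely identifies antipodal diagonals. A secondary care point is verifying the parity conditions in the ``Conditions'' column (e.g.\ $r$ even for the $H_k(r)$ entries in the first two lines, $r$ odd for the $K_2$ lines), which is exactly where \eqref{nm} is used to decide whether $M(r)$ lies in the relevant normal subgroup and hence whether the transfer via Lemma~\ref{lem:quotiso} produces the claimed quotient or instead collapses $Y(r)$ to its bipartition.
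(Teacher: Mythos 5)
Your proposal is correct and follows essentially the same route as the paper's proof: direct computation of the $N$-orbits (the four parity classes for the $C_4$ and $K_2$ lines, the diagonal and anti-diagonal classes for the cyclic lines), identification of the induced group by checking which generators act as rotations versus reflections on the quotient cycle (hence $D_8$ versus $Z_4$, and $D_{2t}$ versus $Z_t$ with the kernel enlarged to $N(2,+)$ or $K$ when reflections are absorbed), and transfer to the $Y(r)$ lines via Lemma~\ref{lem:quotiso} together with \eqref{nm}. The only quibbles are cosmetic: Table~\ref{tbl:main2} has eight lines rather than nine, and for $k=1,3$ the cleanest justification that the induced group is $D_8$ is that all of $\mu_1$ (or $\mu_1\sigma_2$), $\mu_2$ (or $\mu_2\sigma_1$) and $\tau$ act as reflections on the $4$-cycle of blocks, which is exactly the generator-by-generator check you propose.
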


\begin{proof}
The $N(2)$-vertex-orbits on $X(r)$ are the four subsets given in \eqref{ee}. It is straightforward 
to check that the underlying quotient graph of $X(r)$ modulo $N(2)$ is a cycle $C_4$. With the first and 
third edge-orientations (that is, $k=1$ or $3$), there are oriented edges 
in both directions between the adjacent $N(2)$-orbits, so the cyclic quotient is unoriented, 
$N(2)$ is the kernel of the action on the $N(2)$-vertex-orbits, and the induced group is $D_8$. Thus line 1 of 
Table~\ref{tbl:main2} is valid for $N$ in $G_k(r)$ with $k=1,3$. 
Moreover in these cases, if $r$ is even, then $N(2)$ contains $M(r)$ by \eqref{nm}, and it follows from Lemma~\ref{lem:quotiso}
that the quotient of $(Y(r),H_k(r))$ modulo $\ov{N(2)}$ is isomorphic to the quotient of $(X(r),G_k(r))$ modulo $N(2)$, namely $(C_4,D_8)$,
proving the rest of  line 1 of Table~\ref{tbl:main2}. 

On the other hand, if $k=2$, then the edges of the quotient graph of $X(r)$ modulo $N(2)$ (which is a 4-cycle) are oriented 
$\Delta_{ee}\rightarrow\Delta_{eo}\rightarrow\Delta_{oo}\rightarrow\Delta_{oe}\rightarrow\Delta_{ee}$, the 
kernel of the $G_2(r)$-action on the $N(2)$-vertex-orbits is $N(2,+)$, and $G_2(r)/N(2,+)\cong Z_4$. 
Thus line 2 of Table~\ref{tbl:main2} is valid for $N$ in $G_2(r)$. If  $r$ is even, then 
$M(r)\leq N(2,+)$ and arguing as in the previous paragraph, the normal quotient of 
$(Y(r),H_2(r))$ modulo $\ov{N(2,+)}$ is also $(C_4,Z_4)$, proving the rest of  line 2 of 
Table~\ref{tbl:main2}.  

Next suppose that $r$ is odd and $k=2$, or $3$.  Then the preimage in $G_k(r)$ of  $\ov{N(2,+)}$, or $\ov{N(2)}$, is equal to 
$\la N(2,+),\mu_1\mu_2\ra$, or $M(1)$, respectively. Each of these preimage groups has vertex-orbits $\Delta_{ee}\cup\Delta_{oo}$ and 
$\Delta_{eo}\cup\Delta_{oe}$ in $X(r)$, and hence the normal quotients of $(Y(r),H_2(r))$ modulo $\ov{N(2,+)}$, and of $(Y(r),H_3(r))$ 
modulo $\ov{N(2)}$, are both isomorphic to $(K_2,Z_2)$. This proves that  lines 3 and 4 of 
Table~\ref{tbl:main2} are valid.

Now let $(\Ga, G)=(X(r),G_3(r))$ and $(\Ga',G')=(Y(r),H_3(r))$. First consider $N=J=\la\mu_1\mu_2\ra$. 
Then the $N$-vertex-orbits in $\Ga$ are the sets $B_i=\{(i+j,j)\mid j\in\mathbb{Z}_{2r}\}$ for $i\in\mathbb{Z}_{2r}$. 
The quotient $(\Ga_N,G/N)$ is the unoriented cycle $C_{2r}$ with edges of both orientations 
between adjacent $N$-orbits $B_i, B_{i+1}$ (for example, $(i,0)\rightarrow (i+1,0)$ and $(i,0)
\leftarrow ((i+1)-1,-1)$). Hence $N$ is the kernel of the $G$-action on the set of $N$-orbits, and 
$(\Ga_N, G/N)=(C_{2r}, D_{4r})$, as in line 5 of Table~\ref{tbl:main2} for `$N$ in $G_3(r)$'.
The same argument with $2r$ replaced by $s$, proves line 5  for `$N$ in $G_{3Z}(s)$' with $N=J$.
Continuing with $N=J$ in $G_3(r)$, since $N$ contains $M(r)$, it follows from  
Lemma~\ref{lem:quotiso} that the normal quotient of $(\Ga',G')$ modulo $\ov{N}$ is also $(C_{2r},D_{4r})$, 
completing the proof of line 5 of Table~\ref{tbl:main2}. 
Moreover, if we replace $N$ by $J(+)=\la \mu_1\mu_2,\mu_1^r\ra$, then
the $N$-vertex-orbits become $B_i\cup B_{i+r}$, for $0\leq i<r$, and the quotients  
$(\Ga_N,G/N)$ and $(\Ga'_{\ov N},G'/\ov N)$ both become $(C_r,D_{2r})$, proving line 7 of Table~\ref{tbl:main2}.

Now consider $M=\la\mu_1\mu_2^{-1}\ra$ in $G=G_3(r)$. The $M$-vertex-orbits in $\Ga$ are the sets 
$D_i=\{(i+j,-j)\mid j\in\mathbb{Z}_{2r}\}$ for $i\in\mathbb{Z}_{2r}$.  
All the out-neighbours of vertices in $B_i$ lie in $B_{i+1}$, and hence the quotient 
is (an oriented) cycle of length $2r$ and the induced group is $Z_{2r}$.
Moreover the element $\tau\in G$ fixes each $D_i$ setwise and the kernel of the 
$G$-action on the set of $M$-orbits is $N:=K=\la M,\tau\ra$. Thus 
$(\Ga_N,G/N)=(C_{2r},Z_{2r})$, and since $N$ contains $M(r)$, it follows 
from  Lemma~\ref{lem:quotiso} that the normal quotient of $(\Ga',G')$ 
modulo $\ov{N}$ is also $(C_{2r},Z_{2r})$,  as in  line 6 of Table~\ref{tbl:main2}.
If we replace $2r$ by $s$ and $(\Ga,G)$ by $(Z(s),G_{3Z}(s))$, the argument above proves line 6
for $N=K$ in $G_{3Z}(s)$, completing the proof of line 6 of Table~\ref{tbl:main2}.
Finally, if we replace $N$ by $K(+)=\la \mu_1\mu_2^{-1},\tau,\mu_1^r\ra$ in $G_3(r)$, then
the $N$-vertex-orbits in $X(r)$ become $D_i\cup D_{i+r}$ for $0\leq i<r$, and the quotients  
$(\Ga_N,G/N)$ and $(\Ga'_{\ov N},G'/\ov N)$ both become $(C_r,Z_{r})$, as asserted in line 8 of Table~\ref{tbl:main2}.
\end{proof}

Theorem~\ref{thm:main} follows from Lemmas~\ref{lem:lines1-4} and \ref{lem:tbl2}.

\section{Identifying the basic pairs $(\Ga,G)$ for Theorem~\ref{thm:mainB}}\label{sec:basic}  

Each of the pairs $(\Ga,G)$ in Theorem~\ref{tbl:mainB} lies in $\OG(4)$, by Lemma~\ref{lem:x2}.
Before competing the proof of Theorem~\ref{thm:mainB} by determining the basic graph-group pairs, we prove a preliminary lemma. The 
\emph{centraliser} of a subgroup $N$ of a group $G$ is the subgroup  $C_G(N) = \{ g\in G\mid gh=hg,\ \forall\ h\in N\}$. For a prime $p$,
$O_p(G)$ is the (unique) largest normal $p$-subgroup of $G$. By Sylow's Theorem, $O_p(G)$ is contained in every Sylow $p$-subgroup of $G$.
Possibly $O_p(G)=1$. 

\begin{lemma}\label{lem:cent} Suppose that $r\geq3$. 
\begin{enumerate}
\item[(a)] Let $N(2)$ be as in $\eqref{na}$, a subgroup of $G(r)$. Then  $C_{G(r)}(N(2))=\la\mu_1,\mu_2\ra$.  
\item[(b)] For $t$ odd, $O_2(G_2(t))=M(t)$, $O_2(G_3(t))=\la\mu_1^t,\mu_2^t\ra$, and $O_2(G_{3Z}(t))=1$. 
\end{enumerate}
\end{lemma}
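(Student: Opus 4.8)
The plan is to prove Lemma~\ref{lem:cent} by direct computation with the explicit generators from Subsection~\ref{sub:results}~(a), exploiting the semidirect-product structure $G(r)=\la\mu_1,\mu_2\ra\rtimes\la\sigma_1,\sigma_2,\tau\ra$. For part~(a), I would first observe that $M=\la\mu_1,\mu_2\ra$ is abelian, so certainly $M\leq C_{G(r)}(N(2))$ since $N(2)=\la\mu_1^2,\mu_2^2\ra\leq M$. For the reverse inclusion, note that any $g\in G(r)$ can be written $g=m h$ with $m\in M$ and $h\in A_x=\la\sigma_1,\sigma_2,\tau\ra$; since $m$ already centralises $N(2)$, we have $g\in C_{G(r)}(N(2))$ iff $h$ does, so it suffices to check which elements of the dihedral group $A_x$ of order $8$ centralise both $\mu_1^2$ and $\mu_2^2$. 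Here I would simply record the conjugation action: $\sigma_1$ inverts $\mu_1$ and fixes $\mu_2$, $\sigma_2$ fixes $\mu_1$ and inverts $\mu_2$, and $\tau$ swaps $\mu_1\leftrightarrow\mu_2$. The key point is that $r\geq3$ forces $|\mu_i|=2r\geq6$, so $\mu_i^2\neq\mu_i^{-2}$ and inversion is nontrivial on $N(2)$; hence each of $\sigma_1,\sigma_2,\tau$ acts nontrivially on $N(2)$, and a short case-check over the eight elements of $A_x$ shows none centralises $N(2)$ except the identity. This gives $C_{G(r)}(N(2))\cap A_x=1$ and therefore $C_{G(r)}(N(2))=M=\la\mu_1,\mu_2\ra$.

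For part~(b) I would treat the three groups separately, using the standard fact (recalled just before the lemma) that $O_2(G)$ lies in every Sylow $2$-subgroup and is the largest normal $2$-subgroup. Since $t$ is odd, $|\mu_i|\in\{2t,t\}$ splits the translation subgroup $\la\mu_1,\mu_2\ra$ into its $2$-part and odd part: for $G_2(t)$ and $G_3(t)$, where $|\mu_i|=2t$, the $2$-part of $\la\mu_1\ra$ is $\la\mu_1^t\ra\cong Z_2$ (and similarly for $\mu_2$), while for $G_{3Z}(t)$, where $|\mu_i|=t$ is odd, the translations contribute no $2$-torsion. The strategy in each case is to identify a candidate normal $2$-subgroup, verify it is normal and a $2$-group, and then argue it is maximal by showing any larger normal $2$-subgroup would have to meet the odd-order translation part nontrivially or contain an element whose normal closure is not a $2$-group.

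Concretely, for $G_3(t)=\la\mu_1,\mu_2,\tau\ra$ with $t$ odd, I would show $O_2(G_3(t))=\la\mu_1^t,\mu_2^t\ra$: this subgroup is $\cong Z_2^2$, is centralised by $\tau$ (which swaps the two generators) hence normal, and one checks it is the full $2$-part of the normal abelian subgroup $\la\mu_1,\mu_2\ra$; maximality follows because $G_3(t)/\la\mu_1^t,\mu_2^t\ra$ has order $2t^2$ with $t$ odd, and any normal $2$-subgroup must map into $O_2$ of this quotient, which I would argue is trivial using the faithful half-arc-transitive action established in Lemma~\ref{lem:x2}. For $G_2(t)=\la\mu_1\mu_2,\mu_1^2,\sigma_1,\sigma_2,\tau\mu_1\ra$, I expect $O_2(G_2(t))=M(t)=\la(\mu_1\mu_2)^t,\mu_1^{2t}\ra$; here I would verify $M(t)\cong Z_2$ is central (it is the kernel $M(r)$ of the cover $X\to Y$ specialised to $r=t$, already known normal) and then show the quotient $G_2(t)/M(t)=H_2(t)$ has trivial $O_2$ by the biquasiprimitivity recorded in Table~\ref{tbl:mainB}. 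For $G_{3Z}(t)=\la\mu_1,\mu_2,\tau\ra$ acting on $Z(t)$, since $\la\mu_1,\mu_2\ra$ has odd order $t^2$ and $\la\tau\ra\cong Z_2$ acts without centralising any nontrivial translation, there is no nontrivial normal $2$-subgroup, so $O_2(G_{3Z}(t))=1$; the cleanest justification is that $\la\mu_1,\mu_2\ra$ is the unique Hall $2'$-subgroup (as in the proof of Lemma~\ref{lem:z}), whence any normal $2$-subgroup would be a complement that must act trivially by semiregularity, forcing it to be trivial.

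The main obstacle I anticipate is the maximality arguments in part~(b): proving that the stated group really is the \emph{largest} normal $2$-subgroup, rather than merely \emph{a} normal $2$-subgroup, requires controlling the structure of the quotient and ruling out larger normal $2$-subgroups that could arise from combining translations with the point-stabiliser involutions. I would handle this uniformly by passing to the quotient by the candidate $O_2$ and showing that the induced half-arc-transitive action has trivial $2$-core, which reduces to the observation that the point stabiliser acts nontrivially (indeed faithfully enough) on the remaining translation directions, so no further $2$-subgroup can be both normal and act semiregularly. The part~(a) computation is entirely routine by contrast, the only subtlety being the use of $r\geq3$ to guarantee $N(2)\neq 1$ with elements of order exceeding $2$, so that the reflections act nontrivially.
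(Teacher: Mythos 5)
Your part (a) is correct and is essentially the paper's own argument: both proofs exploit the decomposition $G(r)=\la\mu_1,\mu_2\ra\rtimes\la\sigma_1,\sigma_2,\tau\ra$, reduce to the conjugation action of the point stabiliser on $N(2)$, and use $r\geq3$ to make inversion nontrivial. Your treatment of $O_2(G_{3Z}(t))=1$ is also sound in substance (the paper instead derives a contradiction from $G_{3Z}(t)=NQ$ being abelian, but your Hall-$2'$/centraliser-of-a-regular-subgroup argument works).

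The genuine gap is in the maximality step of part (b) for $G_2(t)$ and $G_3(t)$, and it comes from never using part (a). The paper's key step is: since $t$ is odd, $N(2)$ has odd order $t^2$, so $Q:=O_2(G_k(t))$ satisfies $Q\cap N(2)=1$; two normal subgroups with trivial intersection commute elementwise, so $Q\leq C_{G(t)}(N(2))\cap G_k(t)=\la\mu_1,\mu_2\ra\cap G_k(t)$ by part (a), and hence $Q$ lies in the unique Sylow $2$-subgroup of this abelian group, which is exactly $M(t)$ when $k=2$ and $\la\mu_1^t,\mu_2^t\ra$ when $k=3$. Your substitutes for this step do not hold up. For $G_2(t)$ you invoke the biquasiprimitivity of $(Y(r),H_2(r))$ from Table~\ref{tbl:mainB}: first, that entry applies only when $r$ is an odd \emph{prime}, whereas the lemma claims the result for all odd $t\geq3$; second, and fatally, it is circular within the paper, because the proof of that table entry (Lemma~\ref{lem:basic}(b)) rules out large normal $2$-subgroups of $G_2(r)$ precisely by citing Lemma~\ref{lem:cent}(b). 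Your fallback ``uniform'' strategy does not repair this: since $M(t)\leq O_2(G_2(t))$ and the preimage of any normal $2$-subgroup of $H_2(t)$ is again a normal $2$-subgroup, one has $O_2(H_2(t))=O_2(G_2(t))/M(t)$, so ``show the quotient has trivial $2$-core'' is literally the same problem restated, not a reduction. Moreover, the content you propose for that step --- that no further $2$-subgroup can be ``both normal and act semiregularly'' --- begs the question, because nothing forces a normal $2$-subgroup to be semiregular: kernels of degenerate (cycle) quotients are normal and far from semiregular, and excluding non-semiregular normal $2$-subgroups is exactly what must be proved.

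For $G_3(t)$ the hole is patchable from your own ingredients: $G_3(t)/\la\mu_1^t,\mu_2^t\ra\cong G_{3Z}(t)$ (this is line 3 of Table~\ref{tbl:main} with $a=t$, proved in Lemma~\ref{lem:lines1-4} independently of the present lemma), after which your result $O_2(G_{3Z}(t))=1$ finishes the case; but your actual citation --- ``the faithful half-arc-transitive action established in Lemma~\ref{lem:x2}'' --- does not by itself yield triviality of the $2$-core of the quotient. A minor further slip: $\tau$ normalises but does not centralise $\la\mu_1^t,\mu_2^t\ra$ (it swaps the two generators); normality is what you need and what holds.
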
 

\begin{proof}
(a) Let $C:=C_{G(r)}(N(2))$. Recall that $N(2)=\la\mu_1^2,\mu_2^2\ra$ and that $\la\mu_1,\mu_2\ra$ is abelian, so 
$\la\mu_1,\mu_2\ra\leq C$.
Let $K=\la \mu_1,\mu_2,\sigma_1,\sigma_2\ra$. Then each element of $G(r)\setminus K$ interchanges $\la \mu_1\ra$ and 
$\la\mu_2\ra$, and hence does not centralise $N(2)$. Thus $C\leq K$. Similarly, for $i=1,2$, any element of $K$ not 
lying in $\la\mu_1,\mu_2,\sigma_i\ra$ inverts  $\la \mu_{3-i}\ra$, so does not lie in $C$ since $r>2$. It follows that $C=\la\mu_1,\mu_2\ra$.

(b) Let $Q=O_2(G_k(t))$ with $k=2$ or $3$. Since $M(t)=\la\mu_1^t\mu_2^t\ra\cong Z_2$ is normal in $G_k(t)$, we have $M(t)\leq Q$. 
Moreover, since $t$ is odd, the normal subgroups $N(2)$ (or order $t^2$) and $Q$ intersect in the identity subgroup. Hence 
$Q\leq C_{G(t)}(N(2))\cap G_k(t)=L$, say, and by part (a), $L= \la\mu_1,\mu_2\ra\cap G_k(t)$. In fact $Q$ must be contained 
in the unique Sylow $2$-subgroup $P$ of $L$. If $k=2$, then $P=M(t)$ and hence $Q=M(t)$. If $k=3$, then $P=\la\mu_1^t,\mu_2^t\ra$, 
and since this subgroup $P$ is a normal $2$-subgroup of $G_3(t)$, it follows that $Q=P$. Finally consider $Q=O_2(G_{3Z}(t))$.
Since $|G_{3Z}(t)|=2t^2$ with $t$ odd, we have $|Q|\leq2$. Suppose for a contradiction that $|Q|=2$. Then $Q\cap N=1$, where  
$N= \la\mu_1,\mu_2\ra$ (of odd order $t^2$). So $Q$ centralises $N$, but this implies that $G_{3Z}(t)=NQ$ is abelian, which is not 
the case.  Hence $Q=1$.
\end{proof}
 
\begin{lemma}\label{lem:basic}
The `Conditions to be Basic' in Table~$\ref{tbl:mainB}$ are correct, namely, 
\begin{enumerate}
\item[(a)]  $(X(r),G_k(r))$ is basic if and only if $k=1$ and $r$ is an odd prime; 
\item[(b)]  $(Y(r),H_k(r))$ is basic if and only if either $r=2$, or $k=2$ and $r$ is an odd prime; 
\item[(c)]  $(Z(s),G_{3Z}(s))$ is basic if and only if $s$ is an odd prime.  
\end{enumerate}
Moreover the `Basic Type' entries in Table~$\ref{tbl:mainB}$ are also correct.
\end{lemma}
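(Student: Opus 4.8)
The plan is to prove each of the three equivalences by establishing necessity and sufficiency separately, and then to read off the basic type from whichever degenerate quotients turn out to occur. Throughout, membership in $\OG(4)$ is already guaranteed by Lemma~\ref{lem:x2}, so a quotient is non-degenerate precisely when its graph has valency four.

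For the ``only if'' directions I would exhibit, whenever the stated condition fails, a nontrivial normal subgroup producing a proper non-degenerate quotient, citing the appropriate line of Table~\ref{tbl:main}. Concretely: if $r$ is composite, line~1 gives $(X(r),G_k(r))\to(X(a),G_k(a))$ for a proper divisor $1<a<r$; if $k\in\{2,3\}$ (any $r\geq2$), or $k=1$ with $r$ even, then $M(r)\leq G_k(r)$ and line~2 gives $(Y(r),H_k(r))$, which rules out $k\ne1$ and rules out $r=2$ for the $X$-graphs. For the $Y$-graphs with $r$ even, line~4 with $a=r/2$ gives $(X(r/2),G_k(r/2))$; for $r$ odd composite, lines~5 and~6 give smaller $Y$- and $Z$-graphs; and for $(Z(s),G_{3Z}(s))$ with $s$ composite, line~7 gives a smaller $Z$-graph. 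The one genuinely non-routine instance is $(Y(p),H_3(p))$ with $p$ an odd prime: here I apply line~6 with $a=p$, noting that $\ov{N(p)}\cong Z_2$ is nontrivial, to obtain the non-degenerate cross-over quotient $(Z(p),G_{3Z}(p))$, which shows $k=3$ can never be basic for odd $r$. In each case only the side-conditions of the relevant line and the non-triviality of $N$ (respectively $\ov N$) need checking.

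The heart of the argument is the ``if'' direction, where I must classify \emph{all} nontrivial normal subgroups and check that each gives a degenerate quotient. The unifying device is a self-centralising normal subgroup $W\cong Z_p^2$: take $W=N(2)=\la\mu_1^2,\mu_2^2\ra$ for $(X(p),G_1(p))$, $W=\ov{N(2)}$ for $(Y(p),H_2(p))$, and $W=\la\mu_1,\mu_2\ra$ for $(Z(p),G_{3Z}(p))$. Using Lemma~\ref{lem:cent}(a) (for the first case, together with the observation that the projection $G(p)\to D_8$ sends $G_1(p)$ onto $D_8$ with kernel exactly $N(2)$; for the second, the fact that $[g,N(2)]\leq N(2)\cap M(p)=1$ forces centralisers down to $\ov{N(2)}$) I would show $C_G(W)=W$ in each case, so that $G/W$ embeds faithfully in $\GL(2,p)$. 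Since $N$ and $W$ are both normal, $[N,W]\leq N\cap W$, so $N\cap W$ is a $(G/W)$-submodule of $W$, and $N\cap W=1$ forces $N\leq C_G(W)=W$, i.e.\ $N=1$. Hence every nontrivial normal $N$ meets $W$ nontrivially, and its structure is controlled by the submodule lattice of $W$.

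The submodule analysis then splits into three flavours, which also pin down the basic type. For $(X(p),G_1(p))$ and $(Y(p),H_2(p))$ the image of $G/W$ in $\GL(2,p)$ is the square-symmetry group $D_8=\la\,\mathrm{diag}(-1,1),\ \mathrm{swap}\,\ra$, which acts irreducibly on $Z_p^2$ for every odd $p$ (the delicate point is $p\equiv1\pmod 4$, where the order-$4$ rotation acquires eigenlines, but these are interchanged by a reflection, so no line is $D_8$-invariant); thus $N\cap W\in\{1,W\}$, every nontrivial $N$ contains $W$, and $\Ga_N$ is a quotient of $\Ga_W$. Since $\Ga_{N(2)}=(C_4,D_8)$ for $G_1$, every proper quotient is $C_4$, $K_2$ or $K_1$, giving basic type \emph{cycle}; since the two $\ov{N(2)}$-orbits are exactly the two parts of the bipartite graph $Y(p)$, so $\Ga_{\ov{N(2)}}=K_2$, every quotient lies in $\{K_1,K_2\}$, giving basic type \emph{biquasiprimitive}. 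For $(Z(p),G_{3Z}(p))$ the image is $\la\tau\ra$ acting as the swap, whose invariant lines are $J=\la\mu_1\mu_2\ra$ and $\la\mu_1\mu_2^{-1}\ra$; enumerating via $N\cap W\in\{1,J,\la\mu_1\mu_2^{-1}\ra,W\}$ and $NW/W\leq Z_2$ gives the complete list $J,\ \la\mu_1\mu_2^{-1}\ra,\ K,\ W$ of nontrivial proper normal subgroups, with quotients $(C_p,D_{2p})$, $(C_p,Z_p)$, $(C_p,Z_p)$, $K_1$ respectively (Table~\ref{tbl:main2}), all degenerate, giving basic type \emph{cycle}. The remaining case $(Y(2),H_k(2))$ I would dispatch directly: $Y(2)=K_{4,4}$ has only $8$ vertices, so a four-valent proper quotient, being a cover, would need $8/m\geq5$ vertices with $m\geq2$, which is impossible; hence all proper quotients are degenerate, and the $C_4$-quotient of line~1 or~2 of Table~\ref{tbl:main2} shows the type is \emph{cycle}. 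The main obstacle is precisely the sufficiency direction, and within it the twin verifications that $W$ is self-centralising and that the $D_8$-action is irreducible over $\mathbb{F}_p$ in every residue class of $p$; once these are secured, the classification of normal subgroups, and with it the basic type, follows mechanically.
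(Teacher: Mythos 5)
Your proposal is correct, and while your ``only if'' direction coincides with the paper's (both read non-degenerate quotients off lines 1--2, 4--6 and 7 of Table~\ref{tbl:main}, including the cross-over quotient $(Y(p),H_3(p))\to(Z(p),G_{3Z}(p))$ via $\ov{N(p)}\cong Z_2$), your ``if'' direction takes a genuinely different and more uniform route (here I write $p$ for the odd prime $r$ or $s$). The paper argues case by case: for $(X(p),G_1(p))$ it takes a \emph{minimal} normal subgroup $N$, uses Lemma~\ref{lem:cent}(a) to force $N\leq N(2)$, writes $N=\la\mu_1^2\mu_2^{2i}\ra$, extracts $i^2\equiv1\pmod p$ from $\tau$-invariance, and eliminates $i=\pm1$ by conjugating with $\mu_1\sigma_2$; for $(Y(p),H_2(p))$ it transfers the problem back to $(X(p),G_2(p))$ via Lemma~\ref{lem:quotiso} and invokes the $O_2$-computation of Lemma~\ref{lem:cent}(b); and for $(Z(p),G_{3Z}(p))$ it again uses $O_2(G_{3Z}(p))=1$. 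You replace all of this by a single mechanism: a self-centralising normal subgroup $W\cong Z_p^2$ (namely $N(2)$, $\ov{N(2)}$, $\la\mu_1,\mu_2\ra$ respectively), so that $G/W$ embeds in $\GL(2,p)$, together with the observation that $[N,W]\leq N\cap W$ forces every nontrivial normal $N$ to meet $W$; irreducibility of the $D_8$-image (or, for $Z(p)$, listing the swap-invariant lines) then gives $N\supseteq W$ (or $N\cap W\in\{J,\la\mu_1\mu_2^{-1}\ra,W\}$), so every proper quotient factors through the degenerate $\Ga_W$. Your irreducibility check is, in substance, exactly the paper's hand computation ($i^2\equiv 1$ plus one further conjugation) done once and reused; what the packaging buys is the elimination of Lemma~\ref{lem:quotiso} and Lemma~\ref{lem:cent}(b) from the argument, a direct treatment of $H_2(p)$ without passing back to the cover, and the stronger structural conclusion that every nontrivial normal subgroup contains $W$ (or an invariant line), from which the Basic Type entries drop out immediately. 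The two facts you leave as sketches --- that $\la\mu_1,\mu_2\ra\cap G_1(p)=N(2)$ and that $\la\mu_1,\mu_2\ra\cap G_2(p)=N(2)\times M(p)$, the latter being what makes $C_{H_2(p)}(\ov{N(2)})=\ov{N(2)}$ --- are easily verified, and your commutator argument $[g,N(2)]\leq N(2)\cap M(p)=1$ is sound; your handling of $(Y(2),H_k(2))$ by vertex counting and your type determinations via Table~\ref{tbl:main2} agree with the paper.
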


\begin{proof} 
(a)  Suppose first that $(X(r),G_k(r))$ is basic, that is, $(X(r),G_k(r))$ has no proper nondegenerate normal quotients. 
It follows from lines 1 and 2 of Table~\ref{tbl:main} that $k=1$ and $r$ is an odd prime. 

Conversely suppose that $k=1$ and $r$ is an odd prime, and assume, for a contradiction that
$G_1(r)$ has a nontrivial normal subgroup $N$ such that $(X(r)_N,G_k(r)/N)$ is nondegenerate. 
Then by \cite[Theorem 1.1]{janc1}, $N$ is semiregular  on the vertices  of $X(r)$, and this 
quotient has valency 4, so $N$ has at least five vertex-orbits.  Without loss of generality 
we may assume that $N$ is a minimal normal subgroup of $G_1(r)$. Now $N\cap N(2)= 1$ would 
imply that $N\leq C_{G_1(r)}(N(2))$, and hence by Lemma~\ref{lem:cent} that
$N\leq \la\mu_1,\mu_2\ra\cap G_1(r)=N(2)$, which in turn implies $N\leq N\cap N(2)=1$, a 
contradiction. Hence $N\cap N(2)\ne 1$, so by the minimality of $N$, we have $N\leq N(2)$. 
Since $N(2)$ has only four vertex orbits in $X(r)$, $N$ must be a proper subgroup of $N(2)$, 
and since $|N(2)|=r^2$ and $r$ is an odd prime, it follows that $|N|=r$.   Since $N$ is 
normalised by $\tau\in G_1(r)$, it follows that $N\ne\la\mu_i^2\ra$ for $i=1$ or $i=2$, and hence 
that $N=\la\mu_1^2\mu_2^{2i}\ra$ for some $i$ such that $1\leq i<r$. Now $N$ must contain 
$(\mu_1^2\mu_2^{2i})^\tau = \mu_2^2\mu_1^{2i}$. Since the only element of $N$ projecting to 
$\mu_1^{2i}$ is $(\mu_1^2\mu_2^{2i})^i$, we have $\mu_2^2\mu_1^{2i}= (\mu_1^2\mu_2^{2i})^i$, 
and hence $\mu_2^{2(i^2-1)}=1$, so $i^2\equiv 1\pmod{r}$. This implies that $i=1$ or $r-1$. 
However neither $\la\mu_1^2\mu_2^2\ra$ nor $\la\mu_1^2\mu_2^{2(r-1)}\ra$ is normalised by 
$\mu_1\sigma_2\in G_1(r)$. This is a contradiction. Therefore $(X(r),G_k(r))$ is basic 
when $k=1$ and $r$ is an odd prime. Moreover $(X(r),G_1(r))$ is basic of cycle type, 
see line 1 of Table~$\ref{tbl:main2}$. 

(b)  Suppose next that $(\Ga',G')=(Y(r),H_k(r))$ is basic, where $r$ is even if $k=1$. It follows from lines 4, 5
and 6 of Table~\ref{tbl:main} that either $r=2$, or $k=2$ and $r$ is an odd prime.  
Now we prove the converse. 
Suppose that $r=2$, or $k=2$ and $r$ is an odd prime. Let $M$ be a minimal normal 
subgroup of $G'=H_k(r)$ and consider the quotient $\Ga'_M$. If  
$(\Ga'_M,G'/M)$ is nondegenerate (hence of valency $4$), then 
by \cite[Theorem 1.1]{janc1}, $M$ is semiregular  with at least five vertex-orbits on 
$\Ga'$, and $M$ is the kernel of the $G'$-action on the $M$-orbits. 
On the other hand if $(\Ga'_M,G'/M)$ is degenerate, then we replace $M$ by the kernel of 
the $G'$-action on the $M$-orbits. We consider the possibilities for $(\Ga'_M,G'/M)$: in particular 
if none are nondegenerate then $(\Ga',G')$ is basic. %

Suppose first that $r=2$. Then the graph $\Ga'$ has only eight vertices, and hence $M$ has at most four 
vertex-orbits, so all quotients  $(\Ga'_M,G'/M)$ are degenerate. 
Thus if $r=2$, then $(\Ga',G')$ is basic. It is 
basic of cycle type, by lines 1 and 2 of Table~$\ref{tbl:main2}$.

Suppose now that $k=2$ and $r$ is an odd prime. Note that the normal subgroup $\ov{N(2)}$ of $G'$
has just two vertex-orbits, each of size $r^2$, on $\Ga'$. If $M$ contains $\ov{N(2)}$ then by the minimality of $M$,  
$M=\ov{N(2,+)}$ (the kernel of the $G'$-action on the  $\ov{N(2)}$-orbits) and $(\Ga'_M,G'/M)
=(K_2,Z_2)$ as in line 4 of Table~\ref{tbl:main2}. Suppose now that $M\not\supseteq \ov{N(2)}$. 
By Lemma~\ref{lem:quotiso}, since 
$(\Ga',G')$ is isomorphic to the normal quotient of $(\Ga,G)=(X(r),G_k(r))$ modulo the normal subgroup
$N=M(r)$ of $G$ (Table~\ref{tbl:main}, line 2), it follows that $(\Ga'_M,G'/M)$ is isomorphic to
a normal quotient $(\Ga_L,G/L)$ for some normal subgroup $L$ of $G$ such that $N\leq L$ and 
$L$ is the kernel of the $G$-action on the $L$-vertex-orbits on $\Ga$, and $G/L\cong G'/M$.  
Moreover $L/N$ corresponds to $M$ under the isomorphism $G\rightarrow G'$. In particular, 
$L/N$ is a minimal normal subgroup of $G/N$ if $M$ is a minimal normal subgroup of $G'$, 
and since  $M\not\supseteq \ov{N(2)}$ we have $L\not\supseteq N(2)$. 

If $|L|$ is not divisible by (the odd prime) $r$, then $L$ is a normal 2-subgroup of $G=G_2(r)$ 
properly containing $N=M(r)$. Hence $L\leq O_2(G)$ and $O_2(G)\ne M(r)$, contradicting Lemma~\ref{lem:cent}(b).  
Thus $|L|$ is divisible by $r$. 
Then $L':=L\cap N(2)\ne 1$, and $L'\ne N(2)$ since $L\not\supseteq N(2)$. Since $|N(2)|=r^2$,
it follows that $|L'|=r$,  
and, being the intersection of two normal subgroups, $L'$ is normal in $G=G_2(r)$. 
We argue as in the proof of part (a): $L'\ne\la\mu_i^2\ra$ for $i=1$ or $i=2$ 
since $L'$ is normalised by $\tau\mu_1$. So $L'=\la\mu_1^2\mu_2^{2i}\ra$, for some 
$i$ such that $1\leq i<r$. However $\sigma_2\in G_2(r)$ and $\sigma_2$ does not 
normalise $L'$, contradiction. Thus there is no proper 
normal quotient $(\Ga'_M,G'/M)$ with $M\not\supseteq \ov{N(2)}$.
Hence $(\Ga',G')$ is basic and its only proper normal quotient is $(K_2,Z_2)$.
This implies that $(\Ga',G')$ is basic of biquasiprimitive type, see Table~\ref{tbl:basictypes},
and completes the proof of part (b).

\smallskip
(c)  Suppose now that $(\Ga,G)=(Z(s),G_{3Z}(s))$, where $s$ is odd. If $(\Ga,G)$ is basic
then it follows from line 7 of Table~\ref{tbl:main} that $s$ is an odd prime. Suppose conversely 
that $s$ is an odd prime.  Let $M$ be a nontrivial normal subgroup of $G$ which is equal to the 
kernel of the $G$-action on the $M$-orbits in $\Ga$, and consider $(\Ga_M,G/M)$. If $M$ contains 
$N=N(1)=\la\mu_1,\mu_2\ra$ (of order $s^2$), then since $N$ is vertex-transitive on $Z(s)$, we have
$M=G$ and  $(\Ga_M,G/M)= (K_1,1)$. So assume 
that $N\not\subseteq M$. If $M\cap N=1$, then $M$ is a normal subgroup of
$G$ of order dividing $|G|/|N|=2$, and so $|M|=2$ and $M\leq O_2(G)$, contradicting Lemma~\ref{lem:cent}.
Thus $M\cap N$ must have order $s$, and must be normal in $G$.  Since $M\cap N$ is normalised by 
$\tau\in G$, it follows that $M\cap N=\la\mu_1\mu_2^{\pm 1}\ra$, that is, $M\cap N$ is either $J$
 or the derived subgroup $K'$ of $K$, as defined in \eqref{ld}. If $M\cap N = K'$, then the orbits of $M\cap N$ and $K$ are the same.
Thus $\Ga_M$ is a quotient of $\Ga_{M\cap N}$ which, in either case, is a cycle of length $s$, by lines 5 and 
6 of Table~\ref{tbl:main2}. Thus $(\Ga,G)$ is basic of cycle type, completing the proof of part (c).
\end{proof}

Finally we observe that Theorem~\ref{thm:mainB} follows from Lemma~\ref{lem:x2} (for membership in $\OG(4)$),
and  Lemma~\ref{lem:basic}.

\end{document}